\newtheorem{theorem}{Theorem}[section]
\newtheorem{lemma}[theorem]{Lemma}
\newtheorem{proposition}[theorem]{Proposition}
\newtheorem{corollary}[theorem]{Corollary}
\theoremstyle{definition}
\newtheorem{definition}[theorem]{Definition}
\newtheorem{problem}[theorem]{Problem}
\theoremstyle{remark}
\newtheorem{remark}[theorem]{Remark}
\numberwithin{equation}{section}
\def\fnote#1{\footnote}
\def\ignora#1{}
\def\n3#1{\left\vert  \! \left\vert \! \left\vert \, #1 \, \right\vert \!
  \right\vert \! \right\vert }
\renewcommand{\geq}{\geqslant}
\renewcommand{\leq}{\leqslant}
\newcommand{\Free}{{\mathcal F}}
\newcommand{\Lip}{{\mathrm{Lip}}_0}
\newcommand{\supp}{\operatorname{supp}}
\newcommand{\spa}{\operatorname{span}}
\newcommand{\pten}{\ensuremath{\widehat{\otimes}_\pi}}
\begin{document}

\subjclass[2010]{Primary 46B04; Secondary 46B20, 46B28}

\keywords{Octahedral norms; Lipschitz-free spaces; Bidual; Lipschitz functions spaces}

\title{ Octahedral norms in duals and biduals of Lipschitz-free spaces }

\author {Johann Langemets}
\address{Institute of Mathematics and Statistics, University of Tartu, Narva mnt 18, 51009 Tartu, Estonia}
\email{johann.langemets@ut.ee}
\thanks{The work of Johann Langemets was supported by the Estonian Research Council grant (PSG487) and by the European Regional Development Fund and the programme Mobilitas Pluss (MOBTP138).}
\urladdr{\url{https://johannlangemets.wordpress.com/}}

\author{Abraham Rueda Zoca }\thanks{The research of Abraham Rueda Zoca  was supported by Vicerrectorado de Investigaci\'on y Transferencia de la Universidad de Granada in the program ``Contratos puente”, by MICINN (Spain) Grant PGC2018-093794-B-I00 (MCIU, AEI, FEDER, UE), by Junta de Andaluc\'ia Grant A-FQM-484-UGR18 and by Junta de Andaluc\'ia Grant FQM-0185.}
\address{Universidad de Granada, Facultad de Ciencias.
Departamento de An\'{a}lisis Matem\'{a}tico, 18071-Granada
(Spain)} \email{abrahamrueda@ugr.es}
\urladdr{\url{https://arzenglish.wordpress.com}}

\maketitle

\begin{abstract}
We continue with the study of octahedral norms in the context of spaces of Lipschitz functions and in their duals. First, we prove that the norm of $\mathcal F(M)^{**}$ is octahedral as soon as $M$ is unbounded or is not uniformly discrete. Further, we prove that a concrete sequence of uniformly discrete and bounded metric spaces $(K_m)$ satisfies that the norm of $\mathcal F(K_m)^{**}$ is octahedral for every $m$. Finally, we prove that if $X$ is an arbitrary Banach space and the norm of $\Lip(M)$ is octahedral, then the norm of $\Lip(M,X^*)$ is octahedral. These results solve several open problems from the literature.
\end{abstract}

\section{Introduction}

The Lipschitz-free space $\mathcal F(M)$ of a metric space $M$ is a Banach space with the property that every Lipschitz function admits a canonical linear extension defined on $\mathcal F(M)$. This linearisation property makes Lipschitz-free spaces a useful tool to study Lipschitz maps between metric spaces and, because of this reason, a big effort to understand its Banach space structure has been made in the last 20 years (see \cite{godefroysurvey, kp, weaver} and references therein). 
The study of geometric properties of Lipschitz-free spaces has experimented a very intense and recent activity (to mention few of it, see \cite{blrlipschitz, pr} for results on octahedrality, \cite{am,gprdaugavet,ikw} for results about Daugavet property, \cite{ag,ap,gprdaugavet,pp,weaver} for results about extremal structure or \cite{ccgmr,godefroysurvey} for the study of norm-attainment of Lipschitz maps).

In this paper paper we will focus on different problems related to octahedral norms in $\Free(M)^{**}$ and in $\Lip(M)$. Recall that the norm of a Banach space $X$, or merely X when there is no ambiguity, is said to be \textit{octahedral} if, for every finite-dimensional subspace $Y$ of $X$ and every $\varepsilon>0$, we can find $x\in S_X$ such that
$$\Vert y+\lambda x\Vert\geq (1-\varepsilon)(\Vert y\Vert+\vert\lambda\vert) $$
holds for every $y\in Y$ and every $\lambda\in\mathbb R$. 


It is classical \cite{blrjfa, deville} that $X$ is octahedral if and only if every convex combination of $w^*$-slices of $B_{X^*}$ has diameter two (this property is known as the \textit{weak-star strong diameter two property ($w^*$-SD2P)}). In the context of free spaces, this characterisation allowed to obtain a full characterisation in \cite{pr} of the class of metric spaces $M$ such that $\mathcal F(M)$ is octahedral. For example, this class contains in particular the unbounded as well as the non-uniformly discrete metric spaces (proved originally in \cite{blrlipschitz}).

A dual version of the above characterisation reads as $X^*$ is octahedral if and only if every convex combination of slices of $B_X$ has diameter two (the \textit{strong diameter two property (SD2P)}, see \cite[Corollary 2.2]{blrjfa}. Hence, it is clear that if $X^{**}$ is octahedral then $X$ is octahedral. However, the converse is not true in general. The natural norm on $\mathcal C([0,1])$ is octahedral but the characteristic function of a singleton is a point of Fr\'echet differentiability of $\mathcal C([0,1])^{**}$. Thus, it is quite natural to ask whether such phenomenon can occur for Lipschitz-free spaces.

\begin{problem}\label{prob:bidualocta}
Is it true for every metric space that if $\mathcal F(M)$ is octahedral, then $\mathcal F(M)^{**}=\Lip(M)^*$ is octahedral? Equivalently, if $\Lip(M)$ has the $w^*$-SD2P, does it necessarily have the SD2P?
\end{problem}

The first main result of this paper (Theorem \ref{theo:theocentralsect2}) shows that the answer is affirmative at least for the unbounded or non-uniformly discrete metric spaces. We also get in Proposition \ref{prop:ivakhnomgeq3} and \ref{prop:discreto} an affirmative answer for a particular family of uniformly discrete bounded metric spaces $(K_m)_m$ which was first considered in \cite[pp. 114]{ivakhno}. In fact we show that for all these metric spaces the space $\Lip(M)$ enjoys the so called \textit{symmetric strong diameter two property (SSD2P)} (see definition in Section \ref{section:bidualoctahedral}) which is in general strictly stronger than the SD2P. It is not known though, whether these two properties coincide for the spaces of Lipschitz functions. It is worth mentioning that these results strengthen (at least formally) the results of \cite{ivakhno} (resp. \cite{hlln}) where it was shown that, in some (resp. all) of the above cases, $\Lip(M)$ enjoys the \textit{slice diameter two property (slice-D2P)} (resp. the $w^*$-SD2P). It is not known whether the slice-D2P implies the SSD2P within the class of spaces of Lispchitz functions, and it is not known whether the SSD2P and the $w^*$-SSD2P coincide in general. On the other hand, it has been recently shown in \cite[Example 3.1]{ostrak} that $w^*$-SSD2P and $w^*$-SD2P are different properties even within the class of spaces of Lipschitz functions. Moreover $w^*$-SSD2P was fully characterised in terms of the metric properties of $M$ in \cite[Theorem 2.1]{ostrak}.

Furthermore, by (a standard) use of the contraction-extension property (CEP) we are able to extend our Theorem \ref{theo:theocentralsect2} to the vector valued case (Theorem \ref{theo:maintheoremLipschitz}).

In the remaining part of the paper we focus on the following problem (see also \cite[Question 3.3]{blrlipschitz}).

\begin{problem}\label{prob:vectorvaluado}
Let $M$ be a metric space such that $\Lip(M)$ is octahedral and let $X$ be a non-trivial Banach space. Is then the space $\Lip(M,X)$ octahedral?
\end{problem}

Note that, thanks to successive papers \cite{gprdaugavet,ikw,am}, $\Lip(M)$ has the Daugavet property if and only if $M$ is a non-trivial length space. In Theorem \ref{teovectorvalSD2P} we solve this problem in the affirmative when $X=Y^*$ for some Banach space $Y$ using the standard fact that in this case we have $\Lip(M,Y^*)=L(Y,\Lip(M))$ isometrically. Let us remark that in general $L(Y,Z)$ can fail to be octahedral even if $Z$ is octahedral \cite[Theorem 3.8]{llr2}. So Theorem 3.1 shows that such an example cannot be produced when $Z$ is an octahedral space of Lispchitz functions. The ideas behind the proof can be also used to improve in Theorem \ref{teotensorgen} the main result of \cite{ruedaquarterly}.
\bigskip

\textbf{Notation:} Given a Banach space $X$, $B_X$ and $S_X$ stand for the closed unit ball and the closed unit sphere, respectively. By a slice of $B_X$ we will mean a set of the form
$$S(B_X,f,\alpha):=\{x\in B_X: f(x)>1-\alpha\},$$
where $f\in S_{X^*}$ and $\alpha>0$. If $X$ is a dual space, say $X=Y^*$, the previous set will be a $w^*$-slice if $f\in Y$. A convex combination of slices of $B_X$ will be a set of the following form
$$\sum_{i=1}^n \lambda_i S_i,$$
where $\lambda_1,\ldots, \lambda_n\in\mathbb R^+$ satisfy that $\sum_{i=1}^n \lambda_i=1$ and $S_i$ is a slice of $B_X$ for every $i\in\{1,\ldots, n\}$. Again, if $X$ is a dual Banach space, the previous set will be a convex combination of $w^*$-slices if each $S_i$ is a $w^*$-slice for every $i\in\{1,\ldots, n\}$.

A Banach space $X$ has the \textit{strong diameter two property (SD2P)} if every convex combination of slices of $B_X$ has diameter $2$. If $X$ is a dual Banach space, then $X$ has the \textit{weak-star strong diameter two property ($w^*$-SD2P)} if every convex combination of $w^*$-star slices of $B_X$ has diameter $2$. As we have pointed out before, the norm of a Banach space $X$ is octahedral if, and only if, $X^*$ has the $w^*$-SD2P \cite[Theorem 2.1]{blrjfa}. Also, $X$ has the SD2P if, and only if, the norm of $X^*$ is octahedral \cite[Corollary 2.2]{blrjfa}.

Given two Banach spaces $X$ and $Y$, we will denote by $L(X,Y)$ the space of operators from $X$ to $Y$, and by $X\pten Y$ the projective tensor product of $X$ and $Y$. We refer to \cite{rya} for a detailed treatment and applications of tensor products.

All the metric spaces considered will be assumed to be complete with no loss of generality. Given a metric space $M$, $B(x,r)$, respectively $S(x,r)$, $\overline{B}(x,r)$, denotes the open ball, respectively the sphere, the closed ball, centered at $x\in M$ with radius $r>0$. Also, given a point $x\in M$ and $0<r<R$, we write
$$C(x,r,R):=\{y\in M: r\leq d(x,y)\leq R\}.$$
We say that $M$ is \textit{uniformly discrete} if $\inf\limits_{x\neq y}d(x,y)>0$. 

Given a metric space $M$ with a designated origin $0$ and a Banach space $X$, we will denote by $\Lip(M,X)$ the Banach space of all $X$-valued Lipschitz functions on $M$ which vanish at $0$ under the standard Lipschitz norm
$$\Vert f\Vert:=\sup\left\{ \frac{\Vert f(x)-f(y)\Vert}{d(x,y)}\ :\ x,y\in M, x\neq y \right\} .$$
First of all, notice that we can consider every point of $M$ as an origin with no loss of generality.

Also, $\Lip(M,X^*)$ is itself a dual Banach space. In fact, the map
$$\begin{array}{ccc}
\delta_{m,x}:\Lip(M,X^*) & \longrightarrow & \mathbb R\\
f & \longmapsto & f(m)(x)
\end{array}$$
defines a linear and bounded map for each $m\in M$ and $x\in X$. In other words, $\delta_{m,x}\in \Lip(M,X^*)^*$. If we define
$$\Free(M,X):=\overline{\spa}(\{\delta_{m,x}\ :\ m\in M, x\in X\}),$$
then we have that $\mathcal F(M,X)^*=\Lip(M,X^*)$. We write $\mathcal F(M):=\mathcal F(M,\mathbb R)$. It is known that $\mathcal F(M,X)=\mathcal F(M)\pten X$ (see \cite[Proposition 2.1]{blrlipschitz}). 
Given a metric space $M$, a Banach space $X$, and a function $f:M\longrightarrow X$, we will denote
$$\supp(f):=\{x\in M:f(x)\neq 0\}.$$
Given a metric space $M$, we say that $M$ is a \emph{length space} if, for every pair of points $x,y \in M$, the distance $d(x,y)$ is equal to the infimum of the lengths of rectifiable curves joining them. Notice that a metric space $M$ is length if, and only if, $\Free(M)$ has the Daugavet property, which in turn is equivalent to the fact that $\Lip(M)$ has the Daugavet property \cite[Theorem 3.5]{gprdaugavet}.

Finally, it is convenient to recall an important tool to construct Lipschitz functions: the classical McShane-Whitney extension theorem. It says that if $N \subseteq M$ and $f \colon N \longrightarrow \mathbb{R}$ is a Lipschitz function, then there is an extension to a Lipschitz function $F \colon M \longrightarrow \mathbb{R}$ with the same Lipschitz constant (see e.g. \cite[Theorem 1.33]{weaver}).

\section{Octahedrality of bidual norms of Lipschitz-free spaces}\label{section:bidualoctahedral}


As we already mentioned in the Introduction, we will be dealing with the following property coming from \cite{anp} which is easily seen to imply the SD2P.

\begin{definition}
Let $X$ be a Banach space. We say that $X$ has the \textit{symmetric strong diameter two property} (\emph{SSD2P}) if, for every $k\in\mathbb N$, every finite family of slices $S_1,\ldots, S_k$ of $B_X$, and every $\varepsilon>0$, there are $x_i\in S_i$ and there exists $\varphi\in B_X$ with $\Vert \varphi\Vert>1-\varepsilon$ such that $x_i\pm \varphi\in S_i$ for every $i\in\{1,\ldots, k\}$.
\end{definition}

If $X$ is a dual Banach space, then the \textit{weak-star symmetric strong diameter two property ($w^*$-SSD2P)} is defined in the natural way just by replacing slices with $w^*$-slices in the above definition.

We can now state the main theorem of this section.

\begin{theorem}\label{theo:theocentralsect2}
Let $M$ be a metric space. If $M$ is unbounded or is not uniformly discrete, then $\Lip(M)$ has the SSD2P. In particular, $\Free(M)^{**}$ is octahedral.
\end{theorem}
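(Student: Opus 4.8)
The final assertion follows once the SSD2P is in hand: the SSD2P implies the SD2P, and by the equivalence ``$X$ has the SD2P if and only if $X^{*}$ is octahedral'' (\cite[Corollary 2.2]{blrjfa}) applied to $X=\Lip(M)$, the SD2P of $\Lip(M)$ yields octahedrality of $\Lip(M)^{*}=\Free(M)^{**}$. So the whole content is the SSD2P of $\Lip(M)$. I would fix $k$, slices $S_{i}=S(B_{\Lip(M)},\Phi_{i},\alpha_{i})$ with $\Phi_{i}\in S_{\Free(M)^{**}}$, and $\varepsilon>0$, then pick $f_{i}\in S_{i}$ and $\eta>0$ with $\Phi_{i}(f_{i})>1-\alpha_{i}+2\eta$ for every $i$. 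The plan is to produce a single perturbation $\varphi\in\Lip(M)$ with $\|\varphi\|>1-\varepsilon$, supported on a ``small'' region, together with points $x_{i}$ obtained by flattening $f_{i}$ on that region, so that $\|x_{i}\pm\varphi\|\le1$ and $\Phi_{i}(x_{i}\pm\varphi)>1-\alpha_{i}$, i.e. $x_{i}\pm\varphi\in S_{i}$.

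The step I expect to be hardest is that the $\Phi_{i}$ are \emph{bidual} functionals: unlike predual ones they need not be weak-$^{*}$ continuous, so a perturbation that is weak-$^{*}$ null (supported far away, or at a vanishing scale) can still be ``seen'' by $\Phi_{i}$. This insensitivity is precisely what makes the full SSD2P stronger than the $w^{*}$-versions of \cite{hlln,ostrak}. To defeat it I would pass to a genuine measure representation. Using the de Leeuw map $f\mapsto\hat f$, $\hat f(x,y)=\frac{f(x)-f(y)}{d(x,y)}$, which embeds $\Lip(M)$ isometrically into $C_{b}(\widetilde M)=C(\beta\widetilde M)$ for $\widetilde M=\{(x,y):x\ne y\}$, I extend each $\Phi_{i}$ by Hahn--Banach and represent it (Riesz) by a regular Borel measure $\mu_{i}$ on $\beta\widetilde M$ with $\|\mu_{i}\|=1$ and $\Phi_{i}(g)=\int\hat g\,d\mu_{i}$ for all $g\in\Lip(M)$. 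The decisive gain is that $\mu_{i}$ is \emph{countably additive}, so the mass ``at infinity'' becomes controllable by a measure-theoretic pigeonhole rather than by weak-$^{*}$ arguments.

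Now the hypotheses enter. If $M$ is unbounded I take norm-$(\approx 1)$ peaks $\varphi_{n}$ supported on balls $B_{n}$ receding to infinity; if $M$ is not uniformly discrete I take peaks supported on balls $B_{n}$ shrinking onto pairs $u_{n}\ne v_{n}$ with $d(u_{n},v_{n})\to0$ — in both cases the peaks used to prove octahedrality in the spirit of \cite{blrlipschitz,pr}. Choosing the $B_{n}$ to form a locally finite, completely separated family in $\widetilde M$, the ``active'' pair-sets $W_{n}=\{(x,y):\{x,y\}\cap B_{n}\neq\varnothing\}$ meet each pair at most twice, so their closures in $\beta\widetilde M$ have uniformly bounded overlap and $\sum_{n}|\mu_{i}|(\overline{W_{n}})<\infty$; hence $|\mu_{i}|(\overline{W_{n}})\to0$, and as there are finitely many $i$ some index $n$ satisfies $|\mu_{i}|(\overline{W_{n}})<\eta$ for all $i$ at once. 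Fix that $n$, set $\varphi=\varphi_{n}$, and let $x_{i}$ be the McShane flattening of $f_{i}$ that is constant on $B_{n}$ and agrees with $f_{i}$ off a slightly larger ball; a routine estimate gives $\|x_{i}\pm\varphi\|\le1$. Off $\overline{W_{n}}$ one has $\widehat{x_{i}\pm\varphi}=\hat f_{i}$ by continuity, so
$$\Phi_{i}(x_{i}\pm\varphi)=\int_{\overline{W_{n}}^{\,c}}\hat f_{i}\,d\mu_{i}+\int_{\overline{W_{n}}}\widehat{x_{i}\pm\varphi}\,d\mu_{i}\ge\Phi_{i}(f_{i})-2\eta>1-\alpha_{i},$$
which places $x_{i}\pm\varphi$ in $S_{i}$ and yields the SSD2P. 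The two delicate points I would have to nail down are the complete separation of $\{B_{n}\}$ in $\widetilde M$ (so that the pigeonhole over $\beta\widetilde M$ is legitimate) and the verification that flattening keeps the Lipschitz norm at most one; both are handled by receding or shrinking the balls and choosing the good location, exactly as in the octahedrality constructions for these classes of $M$.
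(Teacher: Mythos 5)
Your reduction of the final assertion to the SSD2P via \cite[Corollary 2.2]{blrjfa} is exactly the paper's, and your geometric core (flatten each $f_i$ on a small region and plant a peak there, with the regions either receding to infinity or shrinking onto close pairs) is also the paper's. Where you genuinely diverge is in how the bidual functionals are defeated. The paper does this softly: it invokes the sequential criterion for the SSD2P from \cite[Theorem 2.1]{hlln} (find $\psi_{n,i}\to f_i$ weakly, $\varphi_n\to 0$ weakly with $\|\psi_{n,i}\pm\varphi_n\|\to 1$) and feeds it with \cite[Lemma 1.5]{ccgmr}, which says that a \emph{bounded sequence of Lipschitz functions with pairwise disjoint supports is weakly null}. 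Your de Leeuw/Riesz/pigeonhole argument on $\beta\widetilde M$ is a hand-rolled proof of that same weak-nullity, and it can be made to work, but as written it has a hole: from $\sum_n\chi_{W_n}\le 2$ on $\widetilde M$ you cannot in general conclude that the closures $\overline{W_n}$ in $\beta\widetilde M$ have bounded overlap — the $W_n$ are not pairwise disjoint, and closures in the Stone--\v{C}ech compactification do not commute with intersections without an appeal to normality (one needs $\mathrm{cl}_{\beta X}A\cap\mathrm{cl}_{\beta X}B=\mathrm{cl}_{\beta X}(A\cap B)$ for closed sets in a normal space, applied to triples, with the $W_n$ taken closed). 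It is simpler to bypass $\beta\widetilde M$ entirely: for disjointly supported Lipschitz functions one has $\left\|\sum_{n\in F}h_n\right\|\le 2\sup_n\|h_n\|$ for every finite $F$, so Ces\`aro averaging gives weak nullity of $(h_n)$ and of $(g_{n,i}-f_i)$ directly, which is all your pigeonhole was meant to deliver.

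The second under-justified step is the one you call routine: $\|x_i\pm\varphi\|\le 1$. This is where essentially all of the paper's work lives. To flatten $f_i$ on a region carrying a norm-one peak while controlling the Lipschitz constant of both the flattening and the sum, the paper needs \emph{six} nested radii $r_n<s_n<t_n<T_n<S_n<R_n$ with all successive ratios tending to $0$ (the peak lives on $C(0,t_n,T_n)$, the flattening is constant on the much larger $C(0,s_n,S_n)$ and reverts to $f_i$ only outside $C(0,r_n,R_n)$), and even then it only obtains $\|g_{n,i}\pm h_n\|\le 1+o(1)$, not $\le 1$; the exact membership $x_i\pm\varphi\in B_{\Lip(M)}$ required by the definition is recovered by renormalising, which is precisely what the asymptotic criterion of \cite{hlln} is designed to absorb. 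You would need the same multiscale construction and the same renormalisation, and you should also note that the case where $M$ is discrete but not uniformly discrete does not fit the single-origin annulus picture (the paper delegates it to \cite[Theorem 5.4]{ccgmr}); your ``balls shrinking onto pairs $u_n\neq v_n$'' would have to be carried out with the same vanishing-ratio care around each pair separately.
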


The case when $M'$ is infinite was already proved in \cite[Theorems 5.1 and 5.5]{ccgmr}. In spite of being a stronger property, SSD2P is sometimes easier to check than the SD2P. This happens in the case of infinite-dimensional uniform algebras or in Banach spaces with an infinite-dimensional centralizer (c.f. e.g. \cite{hlln} and references therein). It is particularly useful in Banach spaces for which there is not a good description of the dual Banach space (as spaces of Lipschitz functions), because \cite[Theorem 2.1]{hlln} gives a criterion for the SSD2P, which only makes use of weakly convergent nets. Combining this criterion of SSD2P with the sufficient condition for weak convergence of sequences of Lipschitz functions proved in \cite[Lemma 1.5]{ccgmr}, we obtain the following lemma:

\begin{lemma}\label{lemma: sufficient condition of SSD2P}
Let $M$ be a metric space and $X$ be a Banach space. Assume that for every $k\in \mathbb{N}$ and $f_1,\dots,f_k\in S_{\Lip(M,X)}$  there exist sequences $(g_{n,i})\subset \Lip(M,X)$ and $(h_n)\subset S_{\Lip(M,X)}$ such that for every $i\in \{1,\dots,k\}$:
\begin{enumerate}
\item $\limsup\limits_{n\rightarrow \infty} \Vert g_{n,i}\Vert=1$.
    \item $\|g_{n,i}\pm h_n\|\to 1$.
    \item $\supp(g_{n,i}-f_i)\cap \supp(g_{m,i}-f_i)=\emptyset$ whenever $n\neq m$.
    \item $\supp(h_n)\cap \supp(h_m)=\emptyset$ whenever $n\neq m$.
\end{enumerate}
Then $\Lip(M,X)$ has the SSD2P.
\end{lemma}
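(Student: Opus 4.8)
The plan is to verify the hypotheses of the SSD2P criterion from \cite[Theorem 2.1]{hlln}, which reduces the task to exhibiting, for each finite family of slices $S_1,\dots,S_k$ of $B_{\Lip(M,X)}$, appropriate weakly convergent nets. Concretely, that criterion asserts that to get the SSD2P it suffices, given norm-one functions $f_1,\dots,f_k$ essentially attaining the slices, to produce auxiliary functions $g_{n,i}$ and $h_n$ together with a \emph{weak} convergence $g_{n,i}-f_i\to 0$ and $h_n\to 0$ weakly (so that $f_i+ (g_{n,i}-f_i)\pm h_n$ eventually lands in $S_i$ while keeping the norm control). So the real content of this lemma is to show that the four stated conditions (1)--(4) are enough to guarantee the weak-convergence statements that \cite[Theorem 2.1]{hlln} actually requires. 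The bridge for this is the sufficient condition for weak convergence of sequences in $\Lip(M,X)$ established in \cite[Lemma 1.5]{ccgmr}.

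First I would recall precisely what \cite[Lemma 1.5]{ccgmr} gives: a bounded sequence $(u_n)\subset \Lip(M,X)$ with pairwise disjoint supports (and $u_n(0)=0$, which holds automatically in $\Lip_0$) converges weakly to $0$. The disjoint-support hypotheses (3) and (4) are tailored exactly so that the two sequences $u_n:=g_{n,i}-f_i$ (for each fixed $i$) and $h_n$ satisfy this. Thus conditions (3) and (4), via \cite[Lemma 1.5]{ccgmr}, yield $g_{n,i}-f_i\xrightarrow{w}0$ and $h_n\xrightarrow{w}0$. I would check that the sequences are bounded: for $h_n$ this is immediate since $h_n\in S_{\Lip(M,X)}$, and for $g_{n,i}-f_i$ it follows from (1) (which forces $\|g_{n,i}\|$ to stay bounded, limsup equal to $1$) together with $\|f_i\|=1$.

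Next I would feed these weak convergences into the \cite[Theorem 2.1]{hlln} machinery. The idea is that $g_{n,i}\xrightarrow{w}f_i$ means $g_{n,i}$ eventually belongs to the slice $S_i$ (since membership in a slice is an open condition determined by a single functional, and $f_i\in \overline{S_i}$ after choosing $f_i$ suitably close to attaining the defining functional of $S_i$); here condition (1) ensures the norm of $g_{n,i}$ does not collapse, so after a harmless renormalisation each $g_{n,i}$ can be taken in $B_{\Lip(M,X)}$ and in $S_i$. Then $h_n$ plays the role of the symmetric perturbation $\varphi$: condition (2), $\|g_{n,i}\pm h_n\|\to 1$, guarantees that $g_{n,i}\pm h_n$ remain in (a slight enlargement of) the unit ball while the weak convergence $g_{n,i}\pm h_n \xrightarrow{w} f_i$ keeps them in $S_i$; and $\|h_n\|=1$ supplies the required $\|\varphi\|>1-\varepsilon$. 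Passing to a suitable tail index $n$ (depending on the prescribed $\varepsilon$ and the finitely many slices) then produces the points $x_i:=g_{n,i}$ and the perturbation $\varphi:=h_n$ witnessing the SSD2P.

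The main obstacle, and the step requiring the most care, is the simultaneous normalisation: one must ensure that the \emph{same} index $n$ (hence the \emph{same} $\varphi=h_n$) works for all $k$ slices at once, while keeping $\|g_{n,i}\pm h_n\|\le 1$ (not just $\to 1$) after an $\varepsilon$-renormalisation, and while preserving $\|\varphi\|>1-\varepsilon$. Because there are only finitely many indices $i$, the limits in (1) and (2) can be made uniform by passing to a common large $n$, and the weak convergences in each coordinate let all $x_i\pm\varphi$ enter their respective slices simultaneously; the delicate point is to interleave the renormalisation (to push norms just below $1$) with the lower bound on $\|\varphi\|$, which is precisely where \cite[Theorem 2.1]{hlln} is designed to absorb the error and where conditions (1) and (2) are used in tandem. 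I expect the verification that the \cite[Lemma 1.5]{ccgmr} hypotheses hold (boundedness plus disjoint supports) to be routine, and the bookkeeping of the common tail index and the $\varepsilon$-renormalisation to be the part that demands attention.
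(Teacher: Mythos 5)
Your proposal is correct and follows essentially the same route as the paper: reduce to the criterion of \cite[Theorem 2.1]{hlln}, deduce the weak convergences $h_n\xrightarrow{w}0$ and $g_{n,i}\xrightarrow{w}f_i$ from the disjoint-support conditions via \cite[Lemma 1.5]{ccgmr}, and renormalise $\psi_{n,i}:=g_{n,i}/\|g_{n,i}\|$. The only detail worth making explicit is that condition (1) alone does not prevent $\|g_{n,i}\|$ from collapsing along a subsequence; one combines $\limsup_n\|g_{n,i}\|=1$ with the weak lower semicontinuity of the norm (which gives $\liminf_n\|g_{n,i}\|\geq\|f_i\|=1$) to conclude $\|g_{n,i}\|\to 1$, so that the renormalisation preserves $\|\psi_{n,i}\pm h_n\|\to 1$.
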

\begin{proof}
Let $k\in \mathbb{N}$ and $f_1,\dots,f_k\in S_{\Lip(M,X)}$ and assume that $(g_{n,i})\subset \Lip(M,X)$ and $(h_n)\subset S_{\Lip(M,X)}$ satisfy the properties (1)-(4) above.

By \cite[Theorem 2.1]{hlln}, it suffices to show that there are $(\psi_{n,i})\subset S_{\Lip(M,X)}$ and $(\varphi_n)\subset S_{\Lip(M,X)}$ such that $\psi_{n,i}\to f_i$ weakly, $\varphi_n\to 0$ weakly, and $\|\psi_{n,i}\pm \varphi_n\|\to 1$ for every $i\in \{1,\dots,k\}$.

Note that we can take $\varphi_n:=h_n$ for every $n$, because $(h_n)$ is a bounded sequence with disjoint supports and by \cite[Lemma 1.5]{ccgmr} we get that it converges weakly to 0. A similar argument gives us that $(g_{n,i})$ converges weakly to $f_i$ for every $i\in \{1,\dots,k\}$. Moreover, by the weak lower semicontinuity of the norm, we get that $\|g_{n,i}\|\to 1$. Finally, the choice of $\psi_{n,i}:=\frac{g_{n,i}}{\|g_{n,i}\|}$ finishes the proof. 
\end{proof}

As an application of the previous lemma we get a sufficient condition on a metric space $M$ to ensure that $\Lip(M)$ has the SSD2P.

\begin{lemma}\label{lemma:technicallip}
Let $M$ be a metric space. Assume that there are six sequences of scalars $(r_n), (s_n), (t_n),(T_n),(S_n)$, and $(R_n)$ satisfying the following properties:
\begin{enumerate}
    \item The following inequalities 
    $$r_n<s_n<t_n<T_n<S_n<R_n$$
    hold for every $n\in\mathbb N$.
    \item The set $C(0,t_n,T_n)$ is non-empty for every $n\in\mathbb N$.
    \item $C(0,r_n,R_n)\cap C(0,r_m,R_m)=\emptyset$ if $n\neq m$.
    \item The sequences $( \frac{r_n}{s_n})$,$( \frac{s_n}{t_n})$,$( \frac{t_n}{T_n})$,$( \frac{T_n}{S_n})$ and $( \frac{S_n}{R_n})$ converge to $0$.
\end{enumerate}
Then the space $\Lip(M)$ has the SSD2P.
\end{lemma}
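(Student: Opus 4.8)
The plan is to verify the hypotheses of Lemma~\ref{lemma: sufficient condition of SSD2P} in the scalar case $X=\mathbb R$. So I fix $k\in\mathbb N$ and $f_1,\dots,f_k\in S_{\Lip(M)}$ and must produce sequences $(g_{n,i})\subset\Lip(M)$ and $(h_n)\subset S_{\Lip(M)}$ satisfying conditions (1)--(4) there. The entire construction will be radial around the origin and will live, for each $n$, inside the annulus $C(0,r_n,R_n)$. Since these annuli are pairwise disjoint by hypothesis (3), the disjoint-support requirements (3) and (4) of Lemma~\ref{lemma: sufficient condition of SSD2P} will come for free. For the perturbation, using hypothesis (2) I first choose $p_n$ with $t_n\le d(0,p_n)\le T_n$, and I let $h_n$ be the radial function that vanishes where $d(0,\cdot)\le s_n$ or $d(0,\cdot)\ge S_n$, rises with slope $1$ on $s_n\le d(0,\cdot)\le T_n$, and descends back to $0$ on $T_n\le d(0,\cdot)\le S_n$. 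The descending slope is $(T_n-s_n)/(S_n-T_n)$, so hypothesis (4) (via $T_n/S_n\to 0$) makes $h_n$ be $1$-Lipschitz for large $n$; and evaluating on the pair $(p_n,0)$ gives $\Vert h_n\Vert\ge 1-s_n/t_n\to 1$. After normalising I may assume $h_n\in S_{\Lip(M)}$, and by construction $\supp h_n\subseteq\{s_n<d(0,\cdot)<S_n\}\subseteq C(0,r_n,R_n)$.

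The delicate object is $g_{n,i}$: it must agree with $f_i$ off $C(0,r_n,R_n)$, vanish on a neighbourhood of $\supp h_n$, and still have norm close to $1$. Multiplying $f_i$ by a radial cut-off fails, since $\vert f_i(x)\vert$ can be as large as $d(0,x)\approx R_n$ near the outer boundary, and the product then acquires slope near $2$ along radial configurations. Instead I flatten by radial \emph{clamping}. Fix a radial function $\theta_n\colon M\to[0,\infty)$, depending only on $d(0,\cdot)$, equal to $r_n$ where $d(0,\cdot)\le r_n$, decreasing to $0$ on $r_n\le d(0,\cdot)\le s_n$, vanishing on the core $s_n\le d(0,\cdot)\le S_n$, and growing with slope $R_n/(R_n-S_n)$ on $d(0,\cdot)\ge S_n$; then set
$$g_{n,i}=\max\{-\theta_n,\ \min\{f_i,\ \theta_n\}\}.$$
As a clamp of $f_i$ to the band $[-\theta_n,\theta_n]$, its Lipschitz constant is at most $\max\{1,\operatorname{Lip}\theta_n\}=R_n/(R_n-S_n)\to 1$ (hypotheses $r_n/s_n\to0$ and $S_n/R_n\to0$ control the two buffer slopes). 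The amplitude is designed so that $\theta_n\ge\vert f_i\vert$ on $\{d(0,\cdot)\le r_n\}\cup\{d(0,\cdot)\ge R_n\}$, where the clamp is inactive and $g_{n,i}=f_i$, while on the core $\theta_n=0$ forces $g_{n,i}=0$. Thus $\supp(g_{n,i}-f_i)\subseteq C(0,r_n,R_n)$ and $g_{n,i}$ vanishes on $\supp h_n$, giving conditions (3) and (4). Finally $\limsup_n\Vert g_{n,i}\Vert=1$, since $g_{n,i}\to f_i$ weakly (by condition (3) and \cite[Lemma~1.5]{ccgmr}), so weak lower semicontinuity gives $\liminf_n\Vert g_{n,i}\Vert\ge\Vert f_i\Vert=1$, matching the upper bound above; this is condition (1).

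It remains to check condition (2), that $\Vert g_{n,i}\pm h_n\Vert\to1$ (the two signs being symmetric). The lower bound $1-s_n/t_n$ comes again from the pair $(p_n,0)$, on which $g_{n,i}$ vanishes while $h_n$ has slope close to $1$. For the upper bound I estimate the difference quotient of $g_{n,i}+h_n$ on every pair $(x,y)$. If $x,y$ both lie in the core then $g_{n,i}+h_n=h_n$ there and the quotient is at most $\Vert h_n\Vert\le1$; if both lie outside $\supp h_n$ then it equals the quotient of $g_{n,i}$, at most $1+o(1)$. The only genuine interaction is the crossing case, with $x\in\supp h_n$ (so $g_{n,i}(x)=0$) and $y$ on the far side of the core, where $h_n(y)=0$ and the quotient becomes $\vert h_n(x)-g_{n,i}(y)\vert/d(x,y)$. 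The virtue of the clamp is that $\vert g_{n,i}(y)\vert\le\theta_n(d(0,y))$ vanishes linearly as $y$ approaches the core, so $h_n$ and $g_{n,i}$ cannot be near their maximal slopes simultaneously; the radial separation forced by hypothesis (4) then bounds this quotient by $1+o(1)$, uniformly in $(x,y)$.

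I expect this last crossing estimate to be the main technical obstacle: one must reconcile the unboundedly large values of $f_i$ in the outer region with the flatness demanded near $\supp h_n$, and check that along no pair of points do the slopes of $g_{n,i}$ and of $h_n$ add up beyond $1+o(1)$. It is precisely here that the five ratio conditions of hypothesis (4) are consumed — $s_n/t_n$ for the norming of $h_n$, $r_n/s_n$ and $S_n/R_n$ for the buffer slopes of the clamp, and $t_n/T_n$, $T_n/S_n$ for the separation between the rising and decaying regimes that keeps the crossing quotient under control.
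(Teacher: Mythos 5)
Your construction is correct, and it reaches the same endpoint (the hypotheses of Lemma~\ref{lemma: sufficient condition of SSD2P}, with everything localised in the pairwise disjoint annuli $C(0,r_n,R_n)$ so that conditions (3) and (4) are automatic) by a genuinely different route. The paper does not build $g_{n,i}$ explicitly: it prescribes $g_{n,i}=f_i$ on $B(0,r_n)$ and outside $\overline{B}(0,R_n)$, prescribes $0$ on $C(0,s_n,S_n)$, checks the Lipschitz constant of this \emph{partially defined} function (only two non-trivial cases), and then invokes McShane--Whitney to extend norm-preservingly; $h_n$ is an arbitrary norm-one function supported in $C(0,t_n,T_n)$, and the interaction estimate again reduces to two cases. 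Your clamp $g_{n,i}=\max\{-\theta_n,\min\{f_i,\theta_n\}\}$ and explicit tent $h_n$ replace the extension theorem by concrete formulas; your diagnosis that a multiplicative cut-off would fail (slopes adding up to $2$ in the outer region) is exactly the obstruction both constructions are designed to avoid. The one step you leave as an expectation, the crossing estimate, does close, and the clean way to see it is the mediant inequality: for $y$ with $d(0,y)<s_n$ one has $h_n(x)\leq 1\cdot\bigl(d(0,x)-s_n\bigr)$, $|g_{n,i}(y)|\leq\theta_n(y)\leq\frac{r_n}{s_n-r_n}\bigl(s_n-d(0,y)\bigr)$ and $d(x,y)\geq\bigl(d(0,x)-s_n\bigr)+\bigl(s_n-d(0,y)\bigr)$, whence
\[
\frac{|h_n(x)|+|g_{n,i}(y)|}{d(x,y)}\leq\max\Bigl\{1,\tfrac{r_n}{s_n-r_n}\Bigr\},
\]
and symmetrically on the outer side with the slopes $\frac{T_n-s_n}{S_n-T_n}$ and $\frac{R_n}{R_n-S_n}$; all four quantities are controlled by hypothesis (4), so the quotient is $1+o(1)$ uniformly. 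So the slopes of $h_n$ and $g_{n,i}$ indeed never add up, precisely because both functions vanish linearly (with slopes $1$ and $o(1)$ respectively) towards the common boundary of the core, as you say. Two remarks on what each approach buys: the paper's partial-definition-plus-extension argument is shorter because the norm of the extension is controlled for free; more importantly, it adapts verbatim to the vector-valued setting by replacing McShane--Whitney with the CEP (this is how the paper obtains Theorem~\ref{theo:maintheoremLipschitz}), whereas the $\max/\min$ clamp is intrinsically scalar. Your version, on the other hand, is fully explicit and makes visible exactly where each of the five ratio conditions in hypothesis (4) is spent.
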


\begin{proof}
Let $k\in\mathbb{N}$ and $f_1,\dots,f_k\in S_{\Lip(M)}$, and let us apply Lemma \ref{lemma: sufficient condition of SSD2P}.

Define functions $g_{n,i}\colon B(0,r_n)\cup C(0,s_n,S_n)\cup M\setminus \overline{B}(0,R_n)\to \mathbb{R}$ by
\[
g_{n,i}(x):=
\begin{cases}
  f_i(x), & \text{if } d(0,x)<r_n\\
  0, & \text{if } s_n<d(0,x)<S_n\\
  f_i(x), & \text{if } d(0,x)>R_n.
\end{cases}
\]

Let us estimate $\|g_{n,i}\|$.  Let $x,y\in M$ and $x\neq y$. Denote by
\[
A:=|g_{n,i}(x)-g_{n,i}(y)|.
\] 
We only have to consider two non-trivial cases, otherwise it is clear that $A=0$ or $A\leq d(x,y)$:
\begin{itemize}
    \item[(a)] If $d(0,x)<r_n$ and $d(0,y)\in (s_n, S_n)$, then
    \begin{align*}
        \frac{A}{d(x,y)}&=\frac{|f_i(x)|}{d(x,y)}\leq\frac{d(0,x)}{d(x,y)}\leq\frac{r_n}{d(0,y)-d(0,x)}\\
        &\leq\frac{r_n}{s_n-r_n}=\frac{1}{\frac{s_n}{r_n}-1}.
    \end{align*}
    \item[(b)] If $d(0,x)\in (s_n, S_n)$ and $d(0,y)>R_n$, then
    \begin{align*}
        \frac{A}{d(x,y)}&=\frac{|f_i(y)|}{d(x,y)}\leq\frac{d(0,y)}{d(x,y)}\leq\frac{d(0,x)+d(x,y)}{d(x,y)}\\
        &\leq1+\frac{S_n}{R_n-S_n}=1+\frac{1}{\frac{R_n}{S_n}-1}.
    \end{align*}
\end{itemize}
Thus $\|g_{n,i}\|\leq \max\{1+\frac{1}{\frac{R_n}{S_n}-1},\frac{1}{\frac{s_n}{r_n}-1}\}$. Extend now $g_{n,i}$ norm preservingly to $M$, and note that the previous inequality proves that the sequence $(g_{n,i})$ satisfies (1) in Lemma \ref{lemma: sufficient condition of SSD2P}. Note also that $\supp(f_i-g_{n,i})\subset C(0,r_n,R_n)$, so condition (3) of Lemma \ref{lemma: sufficient condition of SSD2P} is satisfied.

Pick a sequence of functions $h_n\in S_{\Lip (M)}$ such that $\supp(h_n)\subset C(0,t_n,T_n)$. In order to apply Lemma \ref{lemma: sufficient condition of SSD2P} let us estimate $\|g_{n,i}\pm h_n\|$. Let $x,y\in M$ and $x\neq y$. Denote by
\[
B:=|(g_{n,i}\pm h_n)(x)-(g_{n,i}\pm h_n)(y)|.
\]
Observe that again we have two non-trivial cases, otherwise the inequality $B\leq d(x,y)$ obviously holds:
\begin{itemize}
    \item[(a)] If $x\in C(0,t_n,T_n)$ and $y\in C(0,s_n, S_n)$, then $g_{n,i}(x)=g_{n,i}(y)=0$ and so
    \begin{align*}
        \frac{B}{d(x,y)}=\frac{|h_n(x)-h_n(y)|}{d(x,y)}\leq\|h_n\|=1.
    \end{align*}
    \item[(b)] If $x\in C(0,t_n,T_n)$ and $y\notin C(0,s_n, S_n)$, then $h_n(y)=0$ and so
      \begin{align*}
        \frac{B}{d(x,y)}&\leq \frac{|g_{n,i}(x)-g_{n,i}(y)|}{d(x,y)}+\frac{|h_{n}(x)-h_{n}(y)|}{d(x,y)}\\
        &\leq \begin{cases}
  \frac{|g_{n,i}(y)|}{d(x,y)}+1, & \text{if } d(0,y)<s_n\\
  1+\frac{|h_n(x)|}{d(x,y)}, & \text{if } d(0,y)>S_n.
\end{cases}\\
        &\leq 1+\begin{cases}
  \frac{d(0,y)}{d(x,0)-d(y,0)}\leq \frac{1}{\frac{t_n}{s_n}-1}, & \text{if } d(0,y)<s_n\\
  \frac{d(0,x)}{d(0,y)-d(0,x)}\leq \frac{1}{\frac{S_n}{T_n}-1}, & \text{if } d(0,y)>S_n.
\end{cases}
        \end{align*}
\end{itemize}
Therefore $\|g_{n,i}\pm h_n\|\leq\max \{1+\frac{1}{\frac{t_n}{s_n}-1},1+\frac{1}{\frac{S_n}{T_n}-1}\}$ from where $\Vert g_{n,i}\pm h_n\Vert\rightarrow 1$. Now Lemma \ref{lemma: sufficient condition of SSD2P} implies that $\Lip(M)$ has the SSD2P, so we are done.
\end{proof}

\begin{proof}[Proof of Theorem \ref{theo:theocentralsect2}]
If $M$ is discrete but not uniformly discrete then the result follows from \cite[Theorem 5.4]{ccgmr}. For the remaining cases we will apply Lemma \ref{lemma:technicallip}. In order to do so, if $M'\neq \emptyset$ (we assume with no loss of generality that $0\in M'$), we can inductively construct the sequences $(r_n), (s_n), (t_n),(T_n),(S_n)$, and $(R_n)$ such that $\max\{\frac{r_n}{s_n},\frac{s_n}{t_n},\frac{t_n}{T_n},\frac{T_n}{S_n},\frac{S_n}{R_n}\}<\frac{1}{2^n}$, that $C(0,t_n,T_n)\neq \emptyset$ and such that $R_{n+1}<r_n$. On the other hand, if $M'=\emptyset$ then the remaining case is when $M$ is unbounded. In such a case, the sequences can be constructed just in a similar way but imposing $R_n<r_{n+1}$.\end{proof}

We are now going to prove a vector valued version of Theorem \ref{theo:theocentralsect2}. For this, let us introduce some notation. We recall that given a metric space $M$ and a Banach space $X$, it is said that the pair $(M,X)$ satisfies the \emph{contraction-extension property} (\emph{CEP}) if McShane-Whitney's extension theorem holds for $X$-valued Lipschitz functions from subsets of $M$, that is, given $N\subseteq M$ and a Lipschitz function $f\colon N\longrightarrow X$, there exists a Lipschitz function $F\colon M\longrightarrow X$ which extends $f$ and satisfies that
$$\Vert F\Vert_{\Lip(M,X)}=\Vert f\Vert_{\Lip(N,X)}.$$
\begin{remark}
Notice that, in the case that the codomain space is a dual space, then the CEP has a reformulation in the language of tensor product spaces: given a metric space $M$ and a Banach space $X$, then the pair $(M,X^*)$ has the CEP if, and only if, for every subset $N$ of $M$ we have that $\Free(N)\pten X$ is an canonical (isometric) subspace of $\Free(M)\pten X$. This follows taking into account the isometric isomorphism between $\Lip(M,X^*)$ and $L(\Free(M),X^*)$ together with \cite[Corollary 2.12]{rya}.
\end{remark}

When $M$ is a Banach space then the definition of the CEP given above agrees with \cite[Definition 2.10]{beli}. The standard examples of pairs having the CEP are $(M,\mathbb R)$ or more generally $(M,\ell_\infty(\Gamma))$ for any metric space $M$ and any set $\Gamma$, and $(H,H)$ for any Hilbert space $H$. Furthermore, it is clear that if a pair $(M,X)$ has the CEP then the pair $(N,X)$ has the CEP for every subset $N$ of $M$. Notice also that the CEP is in fact a very restrictive property (see \cite[Theorem 2.11]{beli}).

Now the vector valued version of Lemma \ref{lemma:technicallip} holds as can be seen if one replaces the use of McShane-Whitney theorem with an application of the CEP in the proof. Thus using this modified Lemma \ref{lemma:technicallip} and using \cite[Theorem 5.8]{ccgmr} instead of \cite[Theorem 5.4]{ccgmr} we can prove the following vector valued version of Theorem \ref{theo:theocentralsect2}.

\begin{theorem}\label{theo:maintheoremLipschitz}
Let $M$ be a metric space and $X$ be a Banach space such that the pair $(M,X)$ has the CEP. If $M$ is unbounded or is not uniformly discrete, then $\Lip(M,X)$ has the SSD2P. In particular, if $X=Y^*$ for some Banach space $Y$, we additionally have that $\mathcal F(M,Y)^{**}$ is octahedral.
\end{theorem}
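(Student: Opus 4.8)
The plan is to follow the scalar argument of Theorem~\ref{theo:theocentralsect2} line by line, replacing the use of the McShane--Whitney extension theorem by the CEP. Observe first that Lemma~\ref{lemma: sufficient condition of SSD2P} is already stated for arbitrary $\Lip(M,X)$, so it can be reused verbatim; the only place where the scalar argument genuinely exploits $\mathbb R$ as codomain is the norm-preserving extension of the cut-off functions inside Lemma~\ref{lemma:technicallip}. Accordingly, the first step is to record a vector-valued version of Lemma~\ref{lemma:technicallip}: assuming that $(M,X)$ has the CEP and that six sequences of scalars satisfying conditions (1)--(4) exist, one claims that $\Lip(M,X)$ has the SSD2P.

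To prove this modified lemma, given $f_1,\dots,f_k\in S_{\Lip(M,X)}$ I would define $g_{n,i}$ on the subset $N_n:=B(0,r_n)\cup C(0,s_n,S_n)\cup (M\setminus\overline{B}(0,R_n))$ by the same three-case cut-off formula as in the scalar proof. The norm estimates for $\|g_{n,i}\|$ on $N_n$ carry over unchanged, since the only scalar-specific inputs are the inequality $\|f_i(x)\|=\|f_i(x)-f_i(0)\|\le d(0,x)$ (valid because $\|f_i\|=1$) and the triangle inequality, neither of which sees the codomain. Then, in place of McShane--Whitney, I would invoke the CEP for the pair $(N_n,X)$ — which holds because $(M,X)$ has the CEP — to extend $g_{n,i}$ to all of $M$ with the same Lipschitz norm. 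For the perturbation functions I would fix $x_0\in S_X$ and set $h_n:=\widetilde h_n\, x_0$, where $\widetilde h_n\in S_{\Lip(M)}$ is a scalar bump with $\supp(\widetilde h_n)\subset C(0,t_n,T_n)$; then $\|h_n\|_{\Lip(M,X)}=\|\widetilde h_n\|_{\Lip(M)}=1$, the support is again contained in $C(0,t_n,T_n)$, and the estimates for $\|g_{n,i}\pm h_n\|$ are identical to the scalar ones. Conditions (1)--(4) of Lemma~\ref{lemma: sufficient condition of SSD2P} then follow exactly as before, yielding the SSD2P of $\Lip(M,X)$.

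With the modified lemma in hand, the proof of the main statement mirrors that of Theorem~\ref{theo:theocentralsect2}. If $M$ is discrete but not uniformly discrete I would appeal to \cite[Theorem 5.8]{ccgmr}, the vector-valued counterpart of \cite[Theorem 5.4]{ccgmr}. In the remaining cases — $M'\neq\emptyset$, say $0\in M'$, or $M$ unbounded — I would construct the six scalar sequences exactly as in the proof of Theorem~\ref{theo:theocentralsect2}, arranging $\max\{r_n/s_n,\dots,S_n/R_n\}<2^{-n}$ together with $C(0,t_n,T_n)\neq\emptyset$ and the disjointness requirement ($R_{n+1}<r_n$ in the accumulation case, $R_n<r_{n+1}$ in the unbounded case), and then apply the modified Lemma~\ref{lemma:technicallip}.

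Finally, for the ``in particular'' clause I would use the identity $\Lip(M,Y^*)=\Free(M,Y)^*$ recorded in the Notation section. Since the SSD2P implies the SD2P, the space $\Free(M,Y)^*$ has the SD2P, and by \cite[Corollary 2.2]{blrjfa} this is equivalent to octahedrality of its dual $\Free(M,Y)^{**}$. I do not anticipate a serious obstacle: the entire difficulty is concentrated in checking that the CEP furnishes a norm-preserving extension of the piecewise-defined $g_{n,i}$ off the set $N_n$, and that the annular gaps deliberately omitted from $N_n$ are exactly what keeps $g_{n,i}$ Lipschitz on $N_n$ in the first place. Once this is granted, no estimate distinguishes the vector-valued case from the scalar one, so the delicate part is bookkeeping rather than any new idea.
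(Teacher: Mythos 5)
Your proposal is correct and follows essentially the same route as the paper, which itself only sketches this result by saying that the vector-valued version of Lemma~\ref{lemma:technicallip} is obtained by replacing McShane--Whitney with the CEP and that \cite[Theorem 5.8]{ccgmr} replaces \cite[Theorem 5.4]{ccgmr} in the discrete non-uniformly-discrete case. Your write-up supplies exactly the details the paper leaves implicit (the extension of $g_{n,i}$ off $N_n$ via the CEP, the choice $h_n=\widetilde h_n\,x_0$, and the passage from the SSD2P of $\Lip(M,Y^*)=\Free(M,Y)^*$ to octahedrality of $\Free(M,Y)^{**}$ via \cite[Corollary 2.2]{blrjfa}), and no step is missing.
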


Notice that this theorem improves \cite[Theorem 2.4]{blrlipschitz}, where the $w^*$-SD2P is obtained under the same assumptions, and it gives a positive answer to \cite[Question 3.1]{blrlipschitz}.

In order to answer Problem \ref{prob:bidualocta} it remains to deal with the bounded uniformly discrete metric spaces. We can still hope that in this setting the $w^*$-SD2P in $\Lip(M)$ implies the SD2P but we can not expect that it would imply the SSD2P. Indeed, an example appears in \cite[Example 3.1]{ostrak} of a bounded uniformly discrete metric space such that $\Lip(M)$ has the $w^*$-SD2P but fails the $w^*$-SSD2P. In this context, we can ask whether the $w^*$-SSD2P in $\Lip(M)$ implies the SSD2P but notice that this question is even open for general dual Banach spaces.

The following particular family of uniformly discrete metric spaces was put in focus in \cite{ivakhno} and later studied in \cite{hlln}.

For $m\in \mathbb{N}$ denote by
\[
  K_m:=\{x \in \ell_\infty \colon x(k) \in \{0,1,\dots,m\} \;
  \mbox{for all} \; k \in \mathbb{N}\}
\]
with metric inherited from $\ell_\infty$. Y. Ivakhno proved in \cite{ivakhno} that $\Lip(K_m)$ satisfies the slice-D2P if $m\in\{1,2\}$. Also, it was posed as an open question \cite[Question]{ivakhno} whether $\Lip(K_m)$ has the slice-D2P for $m\geq 3$. In \cite[Theorem 5.7]{hlln}, it was proved that $\Lip (K_m)$ has the w$^\ast$-SSD2P for every $m\in \mathbb{N}$. Adapting the ideas in \cite[Proposition 5.6]{hlln} we will prove the following Proposition. In particular, this gives a positive answer to Ivakhno's question.

\begin{proposition}\label{prop:ivakhnomgeq3} If $m\geq 3$, then the Banach space $\Lip (K_m)$ has the SSD2P.
\end{proposition}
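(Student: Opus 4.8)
The plan is to verify the hypotheses of Lemma~\ref{lemma: sufficient condition of SSD2P} directly for the metric space $K_m$ with $m\geq 3$, since $K_m$ is bounded and uniformly discrete and hence Lemma~\ref{lemma:technicallip} is unavailable (the annular construction there relied on $M$ being unbounded or non-discrete). The key structural feature to exploit is that $K_m$ is a sequence space: each point $x\in K_m$ is determined by its coordinates $x(k)\in\{0,1,\dots,m\}$, and the $\ell_\infty$-metric allows us to modify a single coordinate to move a distance at most $m$ while keeping distances to far-away configurations controlled. The idea, following \cite[Proposition 5.6]{hlln}, is to use disjoint blocks of coordinates to build the perturbations $g_{n,i}$ and $h_n$ with the required disjoint-support property, translating each given $f_i$ by shifting it onto a fresh coordinate block so that the supports of $g_{n,i}-f_i$ (and of the $h_n$) become pairwise disjoint as $n$ varies.

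Concretely, I would fix $k\in\mathbb N$ and $f_1,\dots,f_k\in S_{\Lip(K_m)}$, and for each $n$ choose a coordinate index (or a finite block of indices) $j_n$, all distinct, on which to act. For the bump functions $h_n$ I would take a norm-one Lipschitz function depending only on the $j_n$-th coordinate — for instance something built from the map $x\mapsto \operatorname{dist}(x(j_n),\{0,m\})$ or a suitable piecewise-linear function of $x(j_n)$ normalised to have Lipschitz constant one — whose support sits inside the slab where the $j_n$-th coordinate takes intermediate values. Since distinct $n$ use distinct coordinates, condition~(4) $\supp(h_n)\cap\supp(h_m)=\emptyset$ holds, and $\|h_n\|=1$ gives $(h_n)\subset S_{\Lip(K_m)}$. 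For the $g_{n,i}$ I would define $g_{n,i}$ to equal $f_i$ off a neighbourhood determined by the $j_n$-th coordinate and to be modified near that coordinate so that $g_{n,i}$ and $g_{n,i}\pm h_n$ remain (asymptotically) norm one; the crucial point is that $m\geq 3$ leaves enough room in the set $\{0,1,\dots,m\}$ of coordinate values to separate the region where $f_i$ is kept from the region where $h_n$ lives, which is exactly why the construction fails for small $m$ and is the reason the hypothesis $m\geq 3$ appears.

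The estimates to be carried out are the norm computations establishing Lemma~\ref{lemma: sufficient condition of SSD2P}(1) $\limsup_n\|g_{n,i}\|=1$ and (2) $\|g_{n,i}\pm h_n\|\to 1$. These reduce, as in the proof of Lemma~\ref{lemma:technicallip}, to bounding $|g_{n,i}(x)-g_{n,i}(y)|/d(x,y)$ and the analogous quotient for $g_{n,i}\pm h_n$ across pairs $x,y$ lying in different coordinate-defined regions; because $d$ is the $\ell_\infty$-distance, a pair of points that differ in the $j_n$-th coordinate differ by at least that coordinate gap, so the troublesome quotients are controlled by ratios that tend to zero as the separating coordinate values are pushed apart. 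Condition~(3), $\supp(g_{n,i}-f_i)\cap\supp(g_{m,i}-f_i)=\emptyset$ for $n\neq m$, follows by design since $g_{n,i}-f_i$ is supported in the region governed by the $j_n$-th coordinate.

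The main obstacle I anticipate is the norm estimate in step~(2), namely arranging the perturbation so that adding $\pm h_n$ does not increase the Lipschitz constant beyond $1+o(1)$. Because both $g_{n,i}$ and $h_n$ can have slope close to one, one must ensure their ``active'' regions are metrically well separated in the $\ell_\infty$-geometry of $K_m$ so that the triangle-inequality bound $\|g_{n,i}\pm h_n\|\leq\|g_{n,i}\|+(\text{small})$ actually holds; this is precisely where the extra room afforded by $m\geq 3$ (versus $m\in\{1,2\}$) must be used carefully, and where the discrete nature of the coordinate values — as opposed to the continuous annuli of Lemma~\ref{lemma:technicallip} — makes the bookkeeping more delicate. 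Once these estimates are in place, Lemma~\ref{lemma: sufficient condition of SSD2P} yields the SSD2P for $\Lip(K_m)$ and the proposition follows.
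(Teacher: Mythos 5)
Your overall strategy---verifying the hypotheses of Lemma~\ref{lemma: sufficient condition of SSD2P} directly for $K_m$---is the same as the paper's, but the concrete construction you sketch has a fatal flaw in the disjointness conditions (3) and (4). A function on $K_m$ that depends only on the coordinate $x(j_n)$ has support of the form $\{x\in K_m: x(j_n)\in A\}$ for some $A\subseteq\{0,1,\dots,m\}$, and two such ``slabs'' attached to distinct coordinates $j_n\neq j_{n'}$ always intersect: since the coordinates of elements of $K_m$ can be prescribed independently, there is a point with $x(j_n)\in A$ and $x(j_{n'})\in A'$ simultaneously. So $\supp(h_n)\cap\supp(h_{n'})=\emptyset$ does \emph{not} follow from using distinct coordinates, and the same objection applies to the sets $\supp(g_{n,i}-f_i)$. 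The paper avoids this by localising around the points $u_n$ (the element of $K_m$ whose $n$-th coordinate equals $m$ and whose other coordinates vanish): since $d(u_r,u_s)=m\geq 3$ for $r\neq s$, the closed balls $\overline{B}(u_n,1)$ are pairwise disjoint; $h_n$ is the norm-one spike equal to $1$ at $u_n$ and $0$ elsewhere, and $g_{n,i}$ equals the constant $a_{n,i}=\tfrac12\bigl(\inf_{S(u_n,2)}f_i+\sup_{S(u_n,2)}f_i\bigr)$ on $\overline{B}(u_n,1)$ and $f_i$ elsewhere. This is where $m\geq 3$ actually enters (for $m=2$ the balls $\overline{B}(u_n,1)$ overlap, and indeed $K_2=\overline{B}(e,1)$), not through ``extra room among the coordinate values'' in the sense you describe.

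A second problem is your plan to obtain conditions (1) and (2) only asymptotically, ``as the separating coordinate values are pushed apart'': $K_m$ has diameter $m$, which is fixed, so there is no scale to send to infinity and no analogue of the ratios $r_n/s_n\to 0$ from Lemma~\ref{lemma:technicallip}. The estimates must be exact, and in the paper they are: one shows $\|g_{n,i}\|\leq 1$ and $\|g_{n,i}\pm h_n\|\leq 1$ outright, using two special features of the corner points $u_n$, namely that any two points of $S(u_n,2)$ are at distance at most $2$ (whence $|a_{n,i}-f_i(z)|\leq 1$ for $z\in S(u_n,2)$), and that for any $y$ with $d(u_n,y)\geq 2$ one can interpolate a point $z$ on the sphere $S(u_n,1)$ or $S(u_n,2)$ with $d(u_n,y)=d(u_n,z)+d(z,y)$, which absorbs the unit jump created by the spike $h_n$. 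Without these exact geodesic-type computations, the triangle-inequality bound $\|g_{n,i}\pm h_n\|\leq\|g_{n,i}\|+\|h_n\|$ only gives $2$, which is useless. As written, therefore, the proposal does not yield a proof.
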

\begin{proof}
Assume that $m\geq 3$ and $f_1,\dots,f_k\in S_{\Lip(K_m)}$, and let us apply Lemma \ref{lemma: sufficient condition of SSD2P}. Consider the sequence $(u_n)\in K_m$ such that 
$u_n(l)=m$ if $n=l$ and $u_n(l)=0$ otherwise.

For every $n$ and $i$, let
\[
a_{n,i}:=\frac12\left(\inf_{x\in  S(u_n,2)}f_i(x)+ \sup_{x\in S(u_n,2)}f_i(x)\right).
\]
Let $n\in \mathbb{N}$. Note that if $x,y\in S(u_n,2)$, then $d(x,y)\leq 2$. Therefore, for every $z\in S(u_n,2)$ we have that $|a_{n,i}-f_i(z)|\leq 1$.
Define
\[
g_{n,i}(x):=
\begin{cases}
  a_{n,i}, & \text{if } x\in \overline{B}(u_n,1)\\
  f_i & \text{elsewhere}.
\end{cases}
\]
Then $\|g_{n,i}\|\leq 1$ for every $n$ and $i$. Indeed, fix $n$ and $i$. Let $x \in \overline{B}(u_n,1)$ and $y \in K_m \setminus \overline{B}(u_n,1)$.
\begin{itemize}
    \item[(a)] If $d(u_n,y)=2$, then
  \[
    |g_{n,i} (x) - g_{n,i}(y)|
    =
    |a_{n,i} - f_i(y)| \leq 1\leq d(x,y).
  \]
  \item[(b)] If $d(u_n,y)\geq 3$, then find $z\in S(u_n,2)$ such that $d(u_n,y)=d(u_n,z)+d(z,y)$. Thus
 \begin{align*}
    |g_{n,i} (x) - g_{n,i}(y)|
    &=
      |a_{n,i} - f_i(y)| \leq
      |a_{n,i}-f_i(z)|+ |f_i(z)- f_{i}(y)| \\
    &\leq
      1 + d(z,y)=1+d(u_n,y)-2\\
      &=d(u_n,y)-1\leq d(u_n,y)-d(u_n,x)\\
      &\leq d(x,y).
  \end{align*}
\end{itemize}

Observe that $\supp(g_{n,i}-f_i)\subset \overline{B}(u_n,1)$ for every $n$ and $i$, and $\overline{B}(u_r,1)\cap \overline{B}(u_s,1)=\emptyset$ if $r\neq s$.

Define now
\[
h_n(x):=
\begin{cases}
  1, & \text{if } x=u_n,\\
  0 & \text{elsewhere}.
\end{cases}
\]
Clearly $\|h_n\|=1$ for every $n\in \mathbb{N}$ and $\supp(h_n)=\{u_n\}$.

Finally, in order to apply Lemma \ref{lemma: sufficient condition of SSD2P}, let us verify that $\|g_{n,i} \pm h_n\| \leq 1$ for every $n$ and $i$. Fix $n$ and $i$. Let $x \in \overline{B}(u_n,1)$ and $y \in K_m \setminus \overline{B}(u_n,1)$.
\begin{itemize}
    \item[(a)] If $x \not= u_n$, then $h_n(x) = h_n(y) = 0$ and therefore
  \[
    |(g_{n,i} \pm h_n)(x) - (g_{n,i} \pm h_n)(y)|
    =
    |g_{n,i}(x) - g_{n,i}(y)| \leq d(x,y).
  \]
    \item[(b)]   If $x = u_n$, then find $z \in S(u_n,1)$ such that
  $d(u_n,y) = d(u_n,z) + d(z,y)$.
  Thus
  \begin{align*}
    |(g_{n,i} \pm h_n)(u_n) - (g_{n,i} \pm h_n)(y)|
    &=
      |g_{n,i}(z) \pm h_n(u_n) - g_{n,i}(y)| \\
    &\leq
      |g_{n,i}(z) - g_{n,i}(y)| + 1 \\
    &\leq
      d(z,y) + d(u_n,z) = d(u_n,y).
  \end{align*}
\end{itemize}
  Taking supremum we get that $\Vert g_{n,i}\pm h_n\Vert\leq 1$, so Lemma \ref{lemma: sufficient condition of SSD2P} applies to get that $\Lip(M)$ has the SSD2P. \end{proof}

Notice that the previous proof does not work in the case $m=2$ because $K_2=\overline{B}(e,1)$, where $e=(1,1,1,\dots)$. Thus we do not know whether Proposition \ref{prop:ivakhnomgeq3} holds for $m=2$ nor if $\Lip(K_2)$ has at least the SD2P. Notice that $\Lip(K_2)$ has the slice-D2P \cite{ivakhno}. However, a Banach space $X$ can have the slice-D2P but contain convex combinations of slices of arbitrarily small diameter \cite{blradvances}. On the other hand, the case $m=1$ is part of the following general result.

\begin{proposition}\label{prop:discreto}
If $M$ is an infinite set with the discrete metric, then $\Lip (M)$ has the SSD2P.
\end{proposition}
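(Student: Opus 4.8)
The plan is to apply Lemma \ref{lemma: sufficient condition of SSD2P} with $X=\mathbb R$. Since $M$ carries the discrete metric we have $d(x,y)=1$ whenever $x\neq y$, so for every $f\in\Lip(M)$ the Lipschitz norm is just the oscillation $\|f\|=\sup_{x\neq y}|f(x)-f(y)|=\sup_M f-\inf_M f$ (as $f(0)=0$, such $f$ are automatically bounded). Fix $f_1,\dots,f_k\in S_{\Lip(M)}$. The first thing I would observe is that the literal analogue of the construction in Proposition \ref{prop:ivakhnomgeq3}—flattening each $f_i$ to a constant $a$ near a point $u_n$ and adding a height-one bump supported at $\{u_n\}$—\emph{cannot} work here: adding $\pm h_n$ changes the value at the single point $u_n$ by $\pm1$, and no single value can lie within oscillation one of \emph{both} $a+1$ and $a-1$ together with the rest of the range of $f_i$. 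This is the main obstacle, and it forces the perturbation to be spread over two points.

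To overcome it, since $M$ is infinite I would choose pairwise distinct points $u_1,v_1,u_2,v_2,\dots\in M\setminus\{0\}$ and take the perturbation to be a \emph{two-point bump}
\[
h_n(x):=\begin{cases}\tfrac12, & x=u_n,\\ -\tfrac12, & x=v_n,\\ 0,& \text{otherwise.}\end{cases}
\]
Then $h_n(0)=0$ and $\|h_n\|=\tfrac12-(-\tfrac12)=1$, so $h_n\in S_{\Lip(M)}$, and the supports $\supp(h_n)=\{u_n,v_n\}$ are pairwise disjoint, giving condition (4). For the $g_{n,i}$, writing $R_{n,i}:=\sup_{x\in M\setminus\{u_n,v_n\}}f_i(x)$, $L_{n,i}:=\inf_{x\in M\setminus\{u_n,v_n\}}f_i(x)$ and $c_{n,i}:=\tfrac12(R_{n,i}+L_{n,i})$, I would flatten $f_i$ to the \emph{midpoint} $c_{n,i}$ at these two points:
\[
g_{n,i}(x):=\begin{cases}c_{n,i},& x\in\{u_n,v_n\},\\ f_i(x),& \text{otherwise.}\end{cases}
\]
Since $c_{n,i}\in[L_{n,i},R_{n,i}]$ and $R_{n,i}-L_{n,i}\le\|f_i\|=1$, the values of $g_{n,i}$ stay in $[L_{n,i},R_{n,i}]$, so $\|g_{n,i}\|=R_{n,i}-L_{n,i}\le1$; moreover $\supp(g_{n,i}-f_i)\subseteq\{u_n,v_n\}$ and $0\notin\{u_n,v_n\}$, so $g_{n,i}(0)=0$, these supports are pairwise disjoint, and condition (3) holds.

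The crux is condition (2). Off $\{u_n,v_n\}$ the function $g_{n,i}\pm h_n$ agrees with $f_i$, while at $u_n,v_n$ it takes the values $c_{n,i}\pm\tfrac12$ and $c_{n,i}\mp\tfrac12$. Here the midpoint choice pays off: from $R_{n,i}-L_{n,i}\le1$ one checks $c_{n,i}+\tfrac12\ge R_{n,i}$ and $c_{n,i}-\tfrac12\le L_{n,i}$, so the supremum and infimum of $g_{n,i}\pm h_n$ are exactly $c_{n,i}+\tfrac12$ and $c_{n,i}-\tfrac12$, whence $\|g_{n,i}\pm h_n\|=1$ for every $n$, giving condition (2). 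Finally, for condition (1) I already have $\|g_{n,i}\|\le1$; to get $\limsup_n\|g_{n,i}\|=1$, fix $\varepsilon>0$, pick $p,q\in M$ with $f_i(p)-f_i(q)>1-\varepsilon$, and note that, the pairs $\{u_n,v_n\}$ being disjoint, for all but finitely many $n$ both $p,q\in M\setminus\{u_n,v_n\}$, so $\|g_{n,i}\|=R_{n,i}-L_{n,i}>1-\varepsilon$; thus $\lim_n\|g_{n,i}\|=1$. With (1)--(4) verified, Lemma \ref{lemma: sufficient condition of SSD2P} yields that $\Lip(M)$ has the SSD2P.
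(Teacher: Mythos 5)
Your proposal is correct and is essentially the paper's own argument: a two-point antisymmetric bump $h_n=\pm\tfrac12$ on a disjoint sequence of pairs, with $f_i$ flattened to the midpoint of its range on each pair, fed into Lemma \ref{lemma: sufficient condition of SSD2P}. The only (immaterial) difference is that you centre at the midpoint of the range of $f_i$ over $M\setminus\{u_n,v_n\}$ rather than over all of $M$, which is why you need the extra (correct) limiting argument for condition (1).
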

\begin{proof}
Assume with no loss of generality that $d(x,y)=1$ for every $x\neq y$. Pick $f_1,\dots,f_k\in S_{\Lip(M)}$, and let us apply Lemma \ref{lemma: sufficient condition of SSD2P}. Denote by
\[
a_i:=\frac12\left(\inf_{x\in M}f_i(x)+ \sup_{x\in M}f_i(x)\right)\quad \text{for every $i\in\{1,\dots,k\}$}.
\]
Fix a disjoint sequence $x_1, y_1, x_2, y_2, \dots\in M$.
Define
\[
g_{n,i}(x):=
\begin{cases}
  a_{i}, & \text{if } x\in \{x_n,y_n\}\\
  f_i & \text{elsewhere},
\end{cases}
\]
and
\[
h_n(x):=
\begin{cases}
  \frac12, & \text{if } x=x_n,\\
  -\frac12, & \text{if } x=y_n,\\
  0 & \text{elsewhere}.
\end{cases}
\]
Observe that $\|g_{n,i}\|, \|h_n\|\leq 1$ for every $n$ and $i$. Note that $\supp(g_{n,i}-f_i)\subset\supp(h_n)=\{x_n,y_n\}$ so, in order to apply Lemma \ref{lemma: sufficient condition of SSD2P}, let us verify that $\|g_{n,i} \pm h_n\|\leq 1$ holds for every $n$ and $i$. Fix $n$ and $i$ and let $x,y\in M$, $x\neq y$. Suppose that $x=x_n$ and $y\neq y_n$ because the remaining cases are trivial or similar. Note that, since $\Vert f_i\Vert= 1$, then $\sup_{x\in M}f_i(x)-\inf_{x\in M}f_i(x)=1$ holds for every $i\in\{1,\dots,k\}$. Thus
\begin{align*}
    |(g_{n,i}&\pm h_n)(x)- (g_{n,i}\pm h_n)(y)|=\left |a_i \pm\frac12 - f_i(y)\right |\\
    &=\left|\frac12\left(\inf_{x\in M}f_i(x)+ \sup_{x\in M}f_i(x)\right)\pm \frac12\left(\sup_{x\in M}f_i(x)-\inf_{x\in M}f_i(x)\right)-f_i(y)\right|\\
    &\leq \|f_i\|= 1=d(x,y). 
\end{align*}
The arbitrariness of $x$ and $y$ yields that $\Vert g_{n,i}\pm h_n\Vert\leq 1$, so Lemma \ref{lemma: sufficient condition of SSD2P} applies.
\end{proof}

\section{Octahedrality of dual norms of Lipschitz-free spaces }\label{section:d2pfreespaces}

In this section we will deal with a partial positive answer to Problem \ref{prob:vectorvaluado}, namely the theorem~\ref{teovectorvalSD2P} below.

\begin{theorem}\label{teovectorvalSD2P}
Let $M$ be a metric space and $X$ be a Banach space. If the norm of $\Lip(M)$ is octahedral, then the norm of $\Lip(M,X^*)$ is octahedral.
\end{theorem}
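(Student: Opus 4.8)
The plan is to work through the isometric identification $\Lip(M,X^*)=L(X,\Lip(M))$ recalled in the introduction, so that the claim becomes the octahedrality of the operator space $L(X,\Lip(M))$. Equivalently, since $\Free(M,X)=\Free(M)\pten X$ and $\Free(M,X)^*=\Lip(M,X^*)$, \cite[Corollary 2.2]{blrjfa} turns the whole statement into the single implication: if $\Free(M)$ has the SD2P, then $\Free(M)\pten X$ has the SD2P (note that $\Lip(M)$ octahedral is the same as $\Free(M)$ having the SD2P). This is the formulation best suited to the projective tensor product generalisation announced in Theorem \ref{teotensorgen}, and I would keep both pictures in mind. To verify octahedrality of $L(X,\Lip(M))$ I would use the standard reformulation: it suffices that for every $T_1,\dots,T_n\in S_{L(X,\Lip(M))}$ and every $\varepsilon>0$ there is $S\in S_{L(X,\Lip(M))}$ with $\norm{T_i+S}\geq 2-\varepsilon$ for all $i$.

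The construction would start by extracting scalar data. As $\norm{T_i}=1$, pick $e_i\in S_X$ with $\norm{T_i e_i}_{\Lip(M)}>1-\eta$ and set $f_i:=T_i e_i\in B_{\Lip(M)}$, so that $\norm{f_i}>1-\eta$. Applying the octahedrality of $\Lip(M)$ to the finite family $\{f_1,\dots,f_n\}$ yields $g\in S_{\Lip(M)}$ with $\norm{f_i+g}_{\Lip(M)}\geq 2-O(\eta)$ for every $i$. The proof would be complete if one could lift $g$ to a norm-one operator $S\colon X\to\Lip(M)$ for which $Se_i$ essentially equals $g$, because then $\norm{T_i+S}\geq\norm{T_i e_i+Se_i}=\norm{f_i+g}\geq 2-\varepsilon$.

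The main obstacle is exactly this lifting, and it is where the special structure of $\Lip(M)$ must enter: for a general octahedral space $Z$ the space $L(X,Z)$ need not be octahedral \cite[Theorem 3.8]{llr2}. The naive choice $S=\xi\otimes g$ with $\xi\in S_{X^*}$ forces $\xi(e_i)\approx 1$ for all $i$ at once, i.e. a single functional norming every $e_i$, which is impossible in general. To bypass this I would abandon a common target: the goal becomes to realise the octahedrality of $\Lip(M)$ by functions $g_1,\dots,g_n\in S_{\Lip(M)}$ with \emph{pairwise separated supports} and $\norm{f_i+g_i}\geq 2-\varepsilon$, then to choose Hahn--Banach functionals $\xi_i\in S_{X^*}$ with $\xi_i(e_i)=1$ and set $S(e):=\sum_i\xi_i(e)\,g_i$. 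The delicate point --- and the step I expect to be hardest --- is to bound $\norm{S}$ by $1$ by controlling the Lipschitz norm of sums of separated-support functions while keeping each $Se_i$ close to $g_i$; this is precisely the type of estimate carried out in Section \ref{section:bidualoctahedral} (compare Lemmas \ref{lemma:technicallip}, \ref{prop:ivakhnomgeq3} and \ref{prop:discreto}). In the tensor-product language this is the same as spreading the molecules witnessing the SD2P of $\Free(M)$ across separated regions of $M$ and tensoring them against unit vectors of $X$, which is the mechanism I would also push through to obtain Theorem \ref{teotensorgen}.
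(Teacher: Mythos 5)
Your overall architecture coincides with the paper's: pass to $L(X,\Lip(M))$, pick $e_i\in S_X$ with $f_i:=T_ie_i$ almost of norm one, and build a norm-one operator whose range sits inside the closed span of a disjointly supported family $(g_i)\subseteq S_{\Lip(M)}$; your operator $S=\sum_i\xi_i\otimes g_i$ is essentially the paper's $T=j\circ \Psi\circ\phi$, with the map into $c_0$ produced by Hahn--Banach functionals instead of finite representability in $c_0$ plus Lindenstrauss' extension theorem (your version of that particular step is simpler and does work). However, two genuine gaps remain. First, the heart of the matter --- the existence of $g_1,\dots,g_n$ (in the paper, a whole sequence spanning an isometric copy of $c_0$) with separated supports that octahedralize the $f_i$ --- is neither proved nor correctly sourced. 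The estimates you point to in Section \ref{section:bidualoctahedral} are tailored to unbounded or non-uniformly discrete $M$ and say nothing under the present hypothesis that $\Lip(M)$ is octahedral. The paper's route is to observe that this hypothesis forces $M$ to be a length space, hence spreadingly local, so that each $f_i$ almost attains its norm locally at infinitely many points; this is what allows one to place the witnesses $g_n$ on pairwise far-apart small balls and to invoke \cite[Lemma 3.4]{kms} to get $\norm{\sum_n c_ng_n}=\max_n|c_n|$ (mere disjointness of supports does not bound the Lipschitz norm of the sum). This is Proposition \ref{prop:octaLipc0}, and without it your plan does not get off the ground.

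Second, your requirement that $Se_i$ be ``essentially equal'' to $g_i$ is unattainable: $Se_i=g_i+\sum_{j\neq i}\xi_j(e_i)g_j$, and the cross coefficients $\xi_j(e_i)$ are completely uncontrolled, so $\norm{Se_i-g_i}$ can equal $1$. Correspondingly, the diagonal condition $\norm{f_i+g_i}\geq 2-\varepsilon$ is not enough by itself. What saves the argument is the stronger conclusion of Proposition \ref{prop:octaLipc0}: \emph{every} norm-one element of $\overline{\spa}\{g_n\}$ octahedralizes \emph{every} $f_i$, because the increment witnessing $\norm{f_i+g_i}\geq 2-\varepsilon$ is realized at two points of a small ball on which all the other $g_j$ vanish. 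Since $\norm{Se_i}=\max_j|\xi_j(e_i)|=1$, one then gets $\norm{f_i+Se_i}\geq 2-\varepsilon$ directly. If you strengthen your key lemma to that form and prove it via the length-space/spreading-locality route, your argument closes and becomes essentially the paper's proof.
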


The proof of this theorem depends on the canonical identification \\ $\Lip(M,X^*)=L(X,\Lip(M))$ and is strongly inspired by the proof of the following result which improves \cite[Theorem 2.1]{ruedaquarterly}, where the space $Y$ below was additionally assumed to have a monotone basis and be isometrically a subspace of $L_1$.

\begin{theorem}\label{teotensorgen}
Let $X$ and $Y$ be Banach spaces. If $Y$ is finitely representable in $\ell_1$ and has the MAP and the norm of $X$ is octahedral, then the operator norm of $H$ is octahedral whenever $H$ is a subspace of $L(Y,X)$ containing the space of finite-rank operators.
\end{theorem}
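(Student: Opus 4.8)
The plan is to verify the defining inequality of octahedrality directly, building the witnessing direction as a finite-rank operator that factors as
\[
X \xleftarrow{\ W\ } \ell_1^m \xleftarrow{\ U\ } G_0 \xleftarrow{\ R\ } Y ,
\]
where $R$ is supplied by the MAP of $Y$, $U$ by the finite representability of $Y$ in $\ell_1$, and $W$ by the octahedrality of $X$. First I would reduce to a finite family: fix a finite-dimensional subspace $E\subseteq H$ and $\varepsilon>0$, and choose a finite $\varepsilon$-net $S_1,\dots,S_n$ of $S_E$. Since both sides of the octahedrality inequality are $1$-Lipschitz in $S\in E$, it suffices to produce a single $T\in S_H$ with $\norm{S_j+\lambda T}\geq(1-\varepsilon')(\norm{S_j}+\abs{\lambda})$ for all $j$ and all $\lambda\in\real$, for $\varepsilon'$ slightly smaller than $\varepsilon$. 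For each $j$ pick a near-norming point $y_j\in S_Y$, that is $\norm{S_jy_j}>1-\delta$, and set $x_j:=S_jy_j$ and $F_0:=\spa\{x_1,\dots,x_n\}\subseteq X$; here $\delta,\eta>0$ are auxiliary parameters fixed small at the end.

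The core of the argument is the construction of $T$, and the hard part is arranging $\norm{T}\leq 1+\delta$ while $T$ still \emph{adds} to the norm of each $S_j$. The point is that one should not look for an $\ell_1$-structure inside the domain $Y$ (which need not exist, e.g. for $Y=\ell_2$); instead one routes the relevant finite-dimensional piece of $Y$ into $\ell_1^m$ and places the $\ell_1$-additive directions in the codomain, where octahedrality provides them. Concretely, using the MAP of $Y$ I would choose a finite-rank $R\colon Y\to Y$ with $\norm{R}\leq 1$ and $\norm{Ry_j-y_j}\leq\eta$ for every $j$ (possible since $\{y_j\}$ is finite, hence compact), and put $G_0:=R(Y)$. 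Since $Y$ is finitely representable in $\ell_1$, there are $m\in\mathbb N$ and an embedding $U\colon G_0\to\ell_1^m$ with $\norm{g}\leq\norm{Ug}_1\leq(1+\delta)\norm{g}$ for all $g\in G_0$. Iterating the definition of octahedrality of $X$ (choose $v_1$ for $F_0$, then $v_2$ for $\spa(F_0\cup\{v_1\})$, and so on) I would produce $v_1,\dots,v_m\in S_X$ with
\[
\norm{x+\textstyle\sum_{i=1}^m a_iv_i}\geq(1-\delta)\Big(\norm{x}+\textstyle\sum_{i=1}^m\abs{a_i}\Big)
\]
for every $x\in F_0$ and all scalars $a_i$. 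Finally let $W\colon\ell_1^m\to X$ be determined by $We_i=v_i$; because the domain is $\ell_1^m$ one has $\norm{W}=\max_i\norm{v_i}=1$. Set $T:=W\circ U\circ R$. Then $T$ is finite-rank, so $T\in H$, and $\norm{T}\leq\norm{W}\,\norm{U}\,\norm{R}\leq 1+\delta$.

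It then remains to check the estimate. For fixed $j$ write $c^j:=URy_j\in\ell_1^m$, so that $Ty_j=\sum_i c^j_iv_i$ and $\norm{c^j}_1\geq\norm{Ry_j}\geq 1-\eta$. Using $y_j\in S_Y$ together with the iterated octahedrality inequality applied to $x_j\in F_0$ with coefficients $\lambda c^j_i$, I would obtain
\[
\norm{S_j+\lambda T}\geq\norm{(S_j+\lambda T)y_j}=\norm{x_j+\lambda\textstyle\sum_i c^j_iv_i}\geq(1-\delta)\big(\norm{x_j}+\abs{\lambda}\,\norm{c^j}_1\big)\geq(1-\delta)\big(1-\delta+\abs{\lambda}(1-\eta)\big).
\]
Replacing $T$ by $T/\norm{T}\in S_H$ (note $\norm{T}\geq\norm{Ty_j}>0$) and choosing $\delta,\eta$ small enough relative to $\varepsilon$ turns this into $\norm{S_j+\mu\,T/\norm{T}}\geq(1-\varepsilon')(\norm{S_j}+\abs{\mu})$ for all $j$ and $\mu$, which after the $\varepsilon$-net reduction gives octahedrality of $H$. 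The substantive content sits in the second paragraph: the compatibility of the three operators $R,U,W$, and in particular the use of octahedrality of $X$ to transfer the additive $\ell_1$-behaviour to the codomain so that $T$ can simultaneously have norm close to $1$ and act as a norming direction for every $S_j$.
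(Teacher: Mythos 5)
Your proposal is correct and takes essentially the same route as the paper: the witnessing direction is the same finite-rank factorization (a MAP approximant $R$, an almost-isometric embedding of its finite-dimensional range into $\ell_1^m$, and a map sending the $\ell_1^m$ basis to iterated octahedrality witnesses $v_1,\dots,v_m$ in $X$), evaluated at near-norming points $y_j$. The only cosmetic differences are that the paper invokes a known duality criterion to reduce to checking $\Vert T_i+T\Vert>2-\varepsilon$ for finitely many unit vectors with $\lambda=1$ instead of your $\varepsilon$-net reduction over all $\lambda$, and splits your single embedding $U$ into a map into $\ell_1$ followed by a coordinate projection $Q_p$.
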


\begin{proof}
Pick $T_1,\ldots, T_n\in S_H$ and $\varepsilon>0$, and let us find $T\in H$ with $\Vert T\Vert\leq 1$ such that $\Vert T_i+T\Vert>2-\varepsilon$ holds for every $i\in\{1,\ldots, n\}$. This is enough in view of \cite[Proposition 2.1]{hlp2}. To this end, consider $y_i\in S_{Y}$ such that $\Vert T_i(y_i)\Vert>1-\frac{\varepsilon}{4}$ for every $i\in\{1,\ldots, n\}$. Since $Y$ has the MAP we can find a finite-rank operator $P:Y\longrightarrow Y$ with $\Vert P\Vert\leq 1$ and such that $\Vert P(y_i)\Vert>1-\frac{\varepsilon}{4}$. Now the rest of the proof follows word-by-word the proof of \cite[Theorem 2.1]{ruedaquarterly}. However, let us give an sketch of the proof in order to motivate the ideas behind the proof of Theorem \ref{teovectorvalSD2P}.

Define $V:=\spa\{P(y_i): 1\leq i\leq n\}$. Since $Y$ is finitely representable in $\ell_1$ then we can find $\varphi:V\longrightarrow \ell_1$ such that $\Vert \varphi\Vert\leq 1$ and such that $\Vert \varphi(P(y_i))\Vert>1-\frac{\varepsilon}{4}$ holds for every $i\in\{1,\ldots, n\}$. Denote, for every natural number $p$, by $Q_p:\ell_1\longrightarrow \ell_1$ the projection onto the first $p$ coordinates. By the monotonicity of the $\ell_1$ norm we can find $p\in\mathbb N$ such that $\Vert Q_p(\varphi(P(y_i)))\Vert>1-\frac{\varepsilon}{4}$ holds for every $i\in\{1,\ldots, n\}$.

Define now $F:=\spa\{T_i(y_i):1\leq i\leq n\}$, which is a finite-dimensional subspace of $X$. Since the norm of $X$ is octahedral we can find $z_1,\ldots, z_p\in S_X$ such that:
\begin{enumerate}
\item\label{generallr21} $\Vert f+z\Vert>\left(1-\frac{\varepsilon}{4}\right)(\Vert f\Vert+\Vert z\Vert)$ holds for all $f\in F$ and every $z\in Z:= \spa\{z_1,\ldots, z_p\}$.
\item \label{generallr22} $Z$ is $(1+\frac{\varepsilon}{4})$-isometric to $Q_p(\ell_1)$.
\end{enumerate}

From (\ref{generallr22}) we can consider a norm-one map $\Phi:Q_p(\ell_1)\longrightarrow Z$ such that $\Vert \Phi(Q_p(\varphi(P_k(y_i))))\Vert>1-\frac{\varepsilon}{4}$.
Define finally $T:=i\circ\Phi\circ Q_p\circ \varphi\circ P:Y\longrightarrow X$, where $i:Z\hookrightarrow X$ denotes the inclusion operator. Since $P$ is a finite-rank operator we conclude that $T$ is a finite-rank operator and then $T\in H$. Moreover
$$\Vert T\Vert\leq \Vert i\Vert\Vert \Phi\Vert\Vert Q_p\Vert\Vert\varphi\Vert\Vert P\Vert\leq 1.$$
Finally notice that $\Vert T(y_i)\Vert >1-\frac{\varepsilon}{4}$ holds for every $i\in\{1,\ldots, n\}$. Moreover, since $T(Y)\subseteq Z$, we get from (\ref{generallr21}) that
\[\begin{split}\Vert T_i+T\Vert& \geq \Vert T_i(y_i)+T(y_i)\Vert> \left( 1-\frac{\varepsilon}{4} \right)(\Vert T_i(y_i)\Vert+\Vert T(y_i)\Vert)\\
& >\left(
1-\frac{\varepsilon}{4}\right)\left(1-\frac{\varepsilon}{4}+1-\frac{\varepsilon}{4} \right)>2-\varepsilon.
\end{split}
\]
Since $\varepsilon>0$ was arbitrary we conclude the desired result. \end{proof}

Before giving the proof of Theorem \ref{teovectorvalSD2P} let us outline the underlying ideas, which are strongly based on those of the proof of Theorem \ref{teotensorgen}. In that proof, the key idea was to take advantage of the MAP assumption to construct, given a finite-dimensional subspace $E$ of $X$, a norm one operator $T:X\longrightarrow \ell_1$ such that $\Vert T(e)\Vert\geq (1-\varepsilon)\Vert e\Vert$ holds for every $e\in E$, and then make use of the fact that, in a Banach space $Y$ whose norm is octahedral, for every finite-dimensional subspace $Y_0\subseteq Y$ we can find a subspace $Z\subseteq Y$ which is $(1+\varepsilon)$-isometric to $\ell_1$ and satisfying that
$$\Vert y+z\Vert>(1-\varepsilon)(\Vert y\Vert+\Vert z\Vert)$$
holds for every $y\in Y_0$ and $z\in Z$. Now we will prove that, under the hypothesis of Theorem 3.1, for every finite-dimensional subspace $Y_0\subseteq \Lip(M)$ we can find a subspace $Z\subseteq \Lip(M)$ which is isometric to $c_0$ and satisfying that
$$\Vert y+z\Vert>(1-\varepsilon)(\Vert y\Vert+\Vert z\Vert)$$
holds for every $y\in Y_0$ and $z\in Z$. After that, we will make use of theory of finite representability and extension of operators in order to get rid of the MAP assumption.


\begin{proposition}\label{prop:octaLipc0}
Let $M$ be a complete length metric space. Then, for all $f_1,\ldots, f_k\in S_{\Lip(M)}$ and every $\varepsilon>0$ there exists a closed subspace $Y\subseteq \Lip(M)$ such that
\begin{itemize}
    \item $Y$ is isometric to $c_0$ and
    \item for all $1\leq i\leq k$ and all $f\in S_Y$ we have $\Vert f_i+f\Vert\geq 2-\varepsilon$.
\end{itemize}
\end{proposition}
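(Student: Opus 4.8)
The plan is to build $Y$ as the closed linear span of a sequence $(g_n)$ of norm-one bumps, each engineered so that, for every $i$, the same pair of points that witnesses $g_n$ having slope close to $1$ also witnesses $f_i$ having slope close to $1$; the isometric copy of $c_0$ will come from placing the supports of the $g_n$ far apart. The reason this suffices is that once $(g_n)$ is the canonical basis of $c_0$, any $f\in S_Y$ has the form $f=\sum_n b_n g_n$ with $\sup_n\abs{b_n}=1$, attained at some $n_0$; on $\supp(g_{n_0})$ every other $g_n$ vanishes, so $f=b_{n_0}g_{n_0}$ there, and it is enough to produce one pair of points on which $f_i$ and $b_{n_0}g_{n_0}$ both have slope $\approx 1$ with matching sign. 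Notice this forces each \emph{single} $g_n$ to align with \emph{all} of $f_1,\dots,f_k$ simultaneously, which is the source of the main difficulty.

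First I would fix $\delta\ll\varepsilon$ and, for each $i$, choose $x_i\neq y_i$ with $f_i(x_i)-f_i(y_i)>(1-\delta)d(x_i,y_i)$. Since $M$ is a length space I would take a near-geodesic $\gamma_i$ joining them; after reparametrising by arclength, $u_i:=-f_i\circ\gamma_i$ is $1$-Lipschitz on an interval of length $L_i\approx d(x_i,y_i)$ and increases by more than $(1-\delta)L_i$, so its almost-everywhere derivative satisfies $\int(1-u_i')<\delta L_i$ with $1-u_i'\geq 0$. The key step, and the one I expect to be the main obstacle, is to extract from this single global estimate \emph{infinitely many} pairwise separated subarcs of $\gamma_i$ on each of which $f_i$ still has slope close to $1$: a priori $f_i$ might nearly attain its Lipschitz norm only near $(x_i,y_i)$, and naive bisection degrades the slope geometrically, yielding only finitely many usable scales. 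I would get around this by a partition argument: dividing $[0,L_i]$ into $N$ equal pieces, the total mass of $1-u_i'$ being $<\delta L_i$ forces all but a proportion $\le\delta/\delta'$ of the pieces to carry slope $\geq 1-\delta'$, and this proportion is uniform in $N$. A routine exhaustion then produces an infinite, pairwise disjoint (indeed separated, and of vanishing length) family $(J_n^i)_n$ of subarcs of $\gamma_i$ on each of which $f_i$ has slope $\geq 1-\delta'$; I would take $\delta$ small compared with $\delta'$ for this to run.

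Next I would turn each good subarc into a bump. For every $n$ and $i$ I would let $g_n^{(i)}$ be a tent-shaped $1$-Lipschitz function, peaked at the midpoint of $J_n^i$ and supported in a small ball around it, with slope $+1$ and $-1$ on its two halves, and then set $g_n:=\sum_{i=1}^k g_n^{(i)}$ (extended to $M$ by McShane--Whitney). Because the good subarcs sit on the bounded geodesics $\gamma_i$, I can keep them short and sparse, and by choosing all the supporting balls pairwise separated by at least the sum of the corresponding bump-heights I obtain $\norm{g_n}=1$ and that $\overline{\spa}\{g_n\}$ is isometric to $c_0$: the lower bound $\norm{\sum b_n g_n}\geq\sup_n\abs{b_n}$ holds because each bump attains slope $1$ inside its own support, while the upper bound holds because within a single support the slope is $\leq\abs{b_n}$ and across two supports it is controlled by the separation-versus-height condition.

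Finally I would verify the alignment. Given $f=\sum_n b_n g_n\in S_Y$ and $i\in\{1,\dots,k\}$, pick $n_0$ with $\abs{b_{n_0}}=1$. Since $f_i$ is (nearly) monotone along $J_{n_0}^i$ while $g_{n_0}^{(i)}$ rises on one half and falls on the other, one half realises slope $\approx 1$ for $f_i+g_{n_0}$ and the other for $f_i-g_{n_0}$; thus, whatever the sign of $b_{n_0}$, there is a pair of points on which $f$ agrees with $b_{n_0}g_{n_0}$, and on which both $f_i$ and $b_{n_0}g_{n_0}$ have slope $\geq 1-\delta'$ with matching sign. The difference quotient of $f_i+f$ on that pair is then $\geq 2-2\delta'$, and taking $\delta'<\varepsilon/2$ gives $\norm{f_i+f}\geq 2-\varepsilon$ for every $i$ and every $f\in S_Y$, completing the construction.
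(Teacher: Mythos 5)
Your overall strategy is sound and, in its central step, genuinely different from the paper's. Where the paper invokes the \emph{spreading locality} of length spaces from \cite{ikw} as a black box to produce, for each $f_i$, an infinite set of points at which $f_i$ locally almost attains its norm at all small scales, you re-derive this from scratch by composing $f_i$ with a near-geodesic between a norming pair and running a Chebyshev/pigeonhole argument on the a.e.\ derivative of the resulting $1$-Lipschitz function of one real variable; this is more self-contained, and you are right that naive bisection would not work while the measure-theoretic count does. The later stages differ in design rather than substance: the paper uses one-sided bumps and handles signs by symmetrising the family $\{f_i\}$ under negation, whereas your two-sided tents align one half with $f_i+g_{n_0}$ and the other with $f_i-g_{n_0}$; and the paper obtains the $c_0$ upper bound from \cite[Lemma 3.4]{kms} (locality of $M$), whereas you obtain it from the quantitative requirement that supports be separated by at least the sum of the corresponding heights, which indeed suffices, since for $x\in\supp(g_n)$ and $y\in\supp(g_m)$ one then has $|g_n(x)|+|g_m(y)|\leq h_n+h_m\leq d(x,y)$. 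Your final verification via the index $n_0$ with $|b_{n_0}|=\max_n|b_n|=1$ (attained because the coefficient sequence lies in $c_0$) is also correct, including the observation that the slope of $f_i$ between the endpoints of a good subarc, measured in the metric $d$, is at least its slope measured in arclength.

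The one assertion you make without justification, and the only genuine gap, is that the supporting balls of the bumps $g_n^{(i)}$ can be chosen pairwise disjoint (let alone separated) \emph{in $M$}. Your subarcs $J_n^i$ are pairwise disjoint only as subsets of the parameter interval $[0,L_i]$; a near-geodesic in a general length space may self-intersect or return near the same point infinitely often, and the curves $\gamma_i$ for different $i$ may even coincide, so disjointness of parameter intervals gives no separation of the corresponding midpoints in $M$. In the worst case every good subarc could pass through one common point $q$, and then no two supporting balls would be disjoint. To close this you need two further observations. First, the set $D_i=\{t: u_i'(t)\ \text{exists and}\ u_i'(t)>1-\delta'\}$ has positive measure by your Chebyshev bound, while $\gamma_i^{-1}(q)$ is Lebesgue-null for every $q\in M$ (at a density point $t_0$ of $\gamma_i^{-1}(q)$ the curve revisits $q$ at parameters arbitrarily close to any nearby parameter, forcing the metric derivative to vanish at $t_0$, which is incompatible with the arclength normalisation, whose metric derivative is $1$ a.e.); hence $\gamma_i(D_i)$ is an infinite subset of $M$, and every point of it works as a midpoint at all sufficiently small scales. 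Second, from these infinite sets one must select countably many distinct midpoints $p_n^i$, retaining at most one cluster point per index and excluding it from the selection, after which radii $r_n\downarrow 0$ with $d(p_n^i,p_m^j)>2r_n$ for all $(m,j)\neq(n,i)$ exist; taking the heights small relative to these radii then makes your separation condition available, and the rest of your argument goes through. This is exactly the bookkeeping the paper performs with its sets $W_i$, and it is fixable, but as written your proof skips it.
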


\begin{proof}
Let $\varepsilon>0$ and $f_1,\ldots, f_k\in S_{\Lip(M)}$ be given and let us assume, as we may, that for every $1\leq i\leq k$ there is $1\leq j\leq k$ such that $f_i=-f_j$. Recall that it has been proved in \cite{ikw} that any length space is spreadingly local. In particular, the set
$$V_i:=\left\{x\in M: \inf\limits_{\beta>0}\Vert f_i\upharpoonright_{B(x,\beta)} \Vert>1-\varepsilon \right\}$$
is infinite for every $1\leq i\leq k$. A simple induction on $k$ shows that we may select countably infinite $W_i\subseteq V_i$ such that the family $\{W_i: 1\leq i\leq k\}$ is pairwise disjoint. We may further assume that each $W_i$ admits at most one cluster point, which, if it exists, does not belong to $W_j$ for any $1\leq j\leq k$. Let us write $W_i:=\{w_n^i: n\in\mathbb N\}$. By the properties of the sets $W_i$ it is clear that for every $n\in\mathbb N$ there is an $r_n>0$ such that $d(w_n^i,w_m^j)>2r_n$ for every $1\leq i\leq k$ and every $(m,j)\neq (n,i)$. Obviously, we may assume that the sequence $(r_n)$ is decreasing. Let us fix $n$. By the properties of $W_i$, for every $1\leq i\leq k$ there are points $x_n^i, y_n^i\in B(w_n^i,\frac{r_n}{8})$ such that $\frac{f_i(x_n^i)-f(y_n^i)}{d(x_n^i, y_n^i)}>1-\varepsilon$. We define
$$g_n(x):=\left\{\begin{array}{cc}
0,     & \mbox{if }x\in M\setminus \bigcup\limits_{i=1}^k B(w_n^i,\frac{r_n}{2}),  \\
0,    & \mbox{if } x=y_n^i, 1\leq i\leq k,\\
d(x_n^i,y_n^i) & \mbox{if } x=x_n^i, 1\leq i\leq k.
\end{array} \right.$$
It is clear that $\Vert g_n\Vert=1$. We extend $g_n$ in a norm-preserving way to the whole $M$ using the McShane-Whitney theorem. We denote the extension by $g_n$ too. Obviously $\supp(g_n)\subset \bigcup\limits_{i=1}^k \overline{B}(w_n^i,\frac{r_n}{2})$. We set $Y:=\overline{\operatorname{span}}\{g_n\}$. We claim that $(g_n)$ is isometrically equivalent to the canonical basis of $c_0$. Indeed, for $m<n$ let $x\in \supp(g_m)$ and $y\in \supp(g_n)$. Then $d(x,y)\geq r_m-\frac{r_n}{2}-\frac{r_m}{2}>0$. Since $M$ is a length metric space (and thus local), \cite[Lemma 3.4]{kms} yields the conclusion.

To prove the remaining parts, pick $g\in S_Y$ and let us prove that $\Vert f_i+g\Vert>2-\varepsilon$ for all $1\leq i\leq k$. To this end, take $\eta>0$ and find $p\in\mathbb N$ and $\lambda_1,\ldots, \lambda_p\in \mathbb R$ such that 
$$\left\Vert g-\sum_{j=1}^p \lambda_j g_j \right\Vert<\eta.$$
Define $f:=\sum_{j=1}^p \lambda_j g_j$ and notice that $\Vert f\Vert=\max\limits_{1\leq j\leq p}\vert \lambda_j\vert>1-\eta$. Now select $j$ so that $\vert \lambda_j\vert>1-\eta$. If $\lambda_j>1-\eta$, we have for every $1\leq i\leq k$ that
$$\Vert f_i+f\Vert\geq \frac{f_i(x_j^i)-f_i(y_j^i)+\lambda_j(g_j(x_j^i)-g_j(y_j^i))}{d(x_j^i,y_j^i)}>1-\varepsilon+\lambda_j>2-\varepsilon-\eta$$
since $f(x_j^i)=\lambda_j g_j(x_j^i)$ and $f(y_j^i)=\lambda_j g_j(y_j^i)$. If $\lambda_j<-1+\eta$, for $1\leq i\leq k$ let $1\leq l\leq k$ be such that $f_i=-f_l$. Then
$$\Vert f_i+f\Vert=\Vert f_l-f\Vert$$
which is larger than $2-\varepsilon-\eta$ by the preceding case. Finally, since $\Vert g-f\Vert<\eta$ and since $\eta>0$ was arbitrary we get that $\Vert f_i+g\Vert\geq 2-\varepsilon$ as desired.
\end{proof}

Let us now go on into details and let us prove that the norm of $\mathcal F(M,X)^*=\Lip(M,X^*)=L(\mathcal F(M),X^*)=L(X,\Lip(M))$ is octahedral.

\begin{proof}[Proof of Theorem \ref{teovectorvalSD2P}]
Pick $T_1,\ldots, T_k\in S_{L(X,\Lip(M))}$ and $\varepsilon>0$, and let us find a norm-one operator $T:X\longrightarrow \Lip(M)$ such that
\begin{equation}\label{ecuateovectorobjetivo}
\Vert T_i+T\Vert>2-\varepsilon
\end{equation}
holds for every $i\in\{1,\ldots, k\}$. For every $i\in\{1,\ldots, k\}$, choose $a_i\in S_X$ such that $f_i:=T_i(a_i)$ has norm greater than $1-\delta^2$, for $0<\delta <\varepsilon$.

Pick $\eta>0$ such that $\eta+\delta<\varepsilon$ and let us construct an operator $\phi:X\longrightarrow c_0$ such that $\Vert \phi(a_i)\Vert>1-\eta$ holds for every $i\in\{1,\ldots, k\}$. Since $X$ is finitely representable in $c_0$ (c.f. e.g. \cite[Example 11.1.2]{alka}) let an operator $\phi:\spa\{a_1,\ldots,a_k\}\longrightarrow c_0$ be such that $\Vert \phi(a_i)\Vert>1-\eta$ holds for every $i\in\{1,\ldots, k\}$ and that $\Vert \phi\Vert<1$. Since $c_0$ is an $L_1$-predual, the Lindenstrauss celebrated theorem \cite[Theorem 6.1]{linds} yields a compact extension (still denoted by $\phi$) $\phi:X\longrightarrow c_0$ such that $\Vert \phi\Vert\leq 1$. We apply Proposition \ref{prop:octaLipc0} to $\frac{f_i}{\Vert f_i\Vert}$, $1\leq i\leq k$, to find a sequence $(g_n)$ which is isometrically isomorphic to the canonical basis of $c_0$ such that 
$$\Vert f_i+f\Vert\geq 2-\delta$$
holds for every $1\leq i\leq k$ and every $f\in S_Y$.

Since $(g_n)$ is isometrically equivalent to the $c_0$ basis let $\Psi:c_0\longrightarrow \overline{\spa}\{g_n\}$ be the canonical linear isometry given by $\Psi(e_n):=g_n$. Call $j:\overline{\spa}\{g_n\}\hookrightarrow \Lip(M)$ the canonical inclusion and define
$$T:=j\circ \Psi\circ \phi:X\longrightarrow \Lip(M).$$
Notice that $\Vert T\Vert\leq \Vert j\Vert \vert \Psi\Vert \Vert \phi\Vert\leq 1$. Furthermore, since $j$ and $\Psi$ are isometries we get that $\Vert T(a_i)\Vert>1-\eta$ holds for every $i\in\{1,\ldots, k\}$. Also, since $T(X)\subseteq \overline{\spa}\{g_n\}$ we get that 
$$ \left\Vert T_i(a_i)+\frac{T(a_i)}{\Vert T(a_i)\Vert} \right\Vert=\left\Vert f_i+\frac{T(a_i)}{\Vert T(a_i)\Vert} \right\Vert\geq 2-\delta$$
by the condition on the sequence $(g_n)$, from where $\Vert T_i+T\Vert\geq \Vert T_i(a_i)+T(a_i)\Vert\geq 2-\delta-\eta>2-\varepsilon$. This proves that $T$ satisfies \eqref{ecuateovectorobjetivo} and finishes the proof.\end{proof}

\begin{remark}\label{remark:subspacesoh}
For every $n\in\mathbb N$ denote by $P_n:c_0\longrightarrow c_0$ the projection onto the first $n$ coordinates. Notice that, in the proof above, we can find $n$ large enough so that $\Vert P_n(\phi(a_i))\Vert>1-\eta$. This means that the finite-rank operator $\bar T=j\circ\Psi\circ P_n\circ \phi$ satisfies that $\Vert T_i+\bar T\Vert>2-\varepsilon$. From here, it can be proved that the operator norm on $H$ is octahedral for every subspace $H\subseteq L(X,\Lip(M))$ containing finite-rank operators.
\end{remark}

As a consequence we get the following corollary, which answers Question 3.3 in \cite{blrlipschitz}.

\begin{corollary}\label{corocarasd2pvectorval}
Let $M$ be a metric space. The following assertions are equivalent:
\begin{enumerate}
    \item\label{corocarasd2pvectorval1} $\mathcal F(M,X)$ has the SD2P for every Banach space $X$.
    \item\label{corocarasd2pvectorval2} $\mathcal F(M)$ has the SD2P.
    \item\label{corocarasd2pvectorval3} $\mathcal F(M)$ does not have any strongly exposed point.
\end{enumerate}
\end{corollary}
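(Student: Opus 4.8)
The plan is to reduce the whole statement to the duality between the SD2P and octahedrality of the dual norm, with Theorem \ref{teovectorvalSD2P} as the engine, treating the extremal condition (\ref{corocarasd2pvectorval3}) separately. Recall from the Notation section that a Banach space $Z$ has the SD2P if and only if the norm of $Z^{*}$ is octahedral. Taking $Z=\Free(M)$ shows that (\ref{corocarasd2pvectorval2}) is equivalent to the octahedrality of $\Lip(M)=\Free(M)^{*}$; taking $Z=\Free(M,X)$ and using $\Free(M,X)^{*}=\Lip(M,X^{*})$ shows that $\Free(M,X)$ has the SD2P if and only if $\Lip(M,X^{*})$ is octahedral.

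First I would establish (\ref{corocarasd2pvectorval1})$\Leftrightarrow$(\ref{corocarasd2pvectorval2}). The implication (\ref{corocarasd2pvectorval1})$\Rightarrow$(\ref{corocarasd2pvectorval2}) is immediate on taking $X=\mathbb R$, since $\Free(M,\mathbb R)=\Free(M)$. For the converse, assume (\ref{corocarasd2pvectorval2}); by the duality just recalled $\Lip(M)$ is octahedral, so Theorem \ref{teovectorvalSD2P} gives that $\Lip(M,X^{*})$ is octahedral for \emph{every} Banach space $X$. Since $\Free(M,X)^{*}=\Lip(M,X^{*})$ for every $X$, the same duality turns this into the statement that $\Free(M,X)$ has the SD2P for every Banach space $X$, which is (\ref{corocarasd2pvectorval1}). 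Note that no restriction of $X$ to a dual space is needed, as the identification $\Free(M,X)^{*}=\Lip(M,X^{*})$ holds for all $X$.

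It remains to connect this with the absence of strongly exposed points, that is (\ref{corocarasd2pvectorval2})$\Leftrightarrow$(\ref{corocarasd2pvectorval3}). The implication (\ref{corocarasd2pvectorval2})$\Rightarrow$(\ref{corocarasd2pvectorval3}) is soft and valid in any Banach space: if some $\mu\in S_{\Free(M)}$ were strongly exposed by $f\in S_{\Lip(M)}$, the slices $S(B_{\Free(M)},f,\alpha)$ would have diameter tending to $0$ as $\alpha\to 0^{+}$, and since a single slice is a trivial convex combination of slices this contradicts the SD2P. For the reverse implication (\ref{corocarasd2pvectorval3})$\Rightarrow$(\ref{corocarasd2pvectorval2}) I would invoke the known description of the extremal structure of $B_{\Free(M)}$: its strongly exposed points are exactly the strongly exposed molecules $m_{p,q}=(\delta_p-\delta_q)/d(p,q)$, and the absence of such molecules is a metric condition on $M$ which forces every convex combination of slices of $B_{\Free(M)}$ to have diameter $2$.

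The main obstacle is precisely this last implication (\ref{corocarasd2pvectorval3})$\Rightarrow$(\ref{corocarasd2pvectorval2}), because in a general Banach space the absence of strongly exposed points is much weaker than the SD2P; the argument must exploit the special rigidity of Lipschitz-free spaces, namely the coincidence of strongly exposed points with denting molecules and the translation of their absence into a splitting property for slices. By contrast, (\ref{corocarasd2pvectorval1})$\Leftrightarrow$(\ref{corocarasd2pvectorval2}) is a clean formal consequence of Theorem \ref{teovectorvalSD2P} together with the SD2P/octahedrality duality, and is where the genuinely new input enters.
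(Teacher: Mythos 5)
Your proposal is correct and follows essentially the same route as the paper: the equivalence of (\ref{corocarasd2pvectorval1}) and (\ref{corocarasd2pvectorval2}) via Theorem \ref{teovectorvalSD2P} together with the duality between the SD2P of $Z$ and the octahedrality of $Z^{*}$ applied through $\Free(M,X)^{*}=\Lip(M,X^{*})$, and the equivalence of (\ref{corocarasd2pvectorval2}) and (\ref{corocarasd2pvectorval3}) by invoking the known result on Lipschitz-free spaces (the paper cites \cite[Theorem 1.5]{am} for exactly this). Your additional soft argument for (\ref{corocarasd2pvectorval2})$\Rightarrow$(\ref{corocarasd2pvectorval3}) is fine but not needed beyond that citation.
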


\begin{proof}
\eqref{corocarasd2pvectorval1}$\Rightarrow$\eqref{corocarasd2pvectorval2} is trivial, whereas \eqref{corocarasd2pvectorval2}$\Rightarrow$\eqref{corocarasd2pvectorval1} follows from Theorem \ref{teovectorvalSD2P} and by \cite[Corollary 2.2]{blrjfa}. Finally, the equivalence of \eqref{corocarasd2pvectorval2} with \eqref{corocarasd2pvectorval3} is \cite[Theorem 1.5]{am}.
\end{proof}

\begin{remark}
Given a metric space $M$, the existence of a Banach space $X$ such that $\mathcal F(M,X)$ has the SD2P does not imply that $\mathcal F(M)$ has the SD2P. Indeed, given $X=L_1([0,1])$, it follows that
$$\mathcal F(M,X)=\mathcal F(M)\pten X=\mathcal F(M)\pten L_1([0,1])=L_1([0,1],\mathcal F(M))$$
has the SD2P (even the Daugavet property) for every metric space $M$ (see the Example after Corollary 2.3 in \cite{werner}).
\end{remark}

\begin{remark}Let $X$ and $Y$ be two Banach spaces. In \cite[Question 4.1]{llr} it was wondered whether $X\pten Y$ has the SD2P whenever $X$ has the SD2P and $Y$ is non-zero. Notice that a negative answer was given in \cite[Corollary 3.9]{llr2} where it was proved that $L_\infty([0,1])\pten \ell_p^n$ for $2<p<\infty$ and $n\geq 3$ yields a counterexample. However, Theorem \ref{teovectorvalSD2P} gives an affirmative answer in the case that $X$ is a Lipschitz-free space.
 \end{remark}
 
\begin{remark} Let $M$ be a metric space and $X$ be a Banach space.  Since the SD2P and the Daugavet property are equivalent in the context of Lipschitz-free spaces \cite[Theorem 1.5]{am}, a natural question is, in view of Corollary \ref{corocarasd2pvectorval}, whether $\Free(M,X)$ has the Daugavet property whenever $\Free(M)$ has the Daugavet property. Note that, under the assumption that the pair $(M,X^*)$ has the CEP, the answer is affirmative \cite[Corollary 3.12]{gprdaugavet}. In connection with that, and taking into account that $\Free(M,X)=\Free(M)\pten X$, it should be pointed out that the Daugavet property is not preserved, in general, from just one of the factors by taking projective tensor product (c.f. e.g. \cite[Corollary 4.3]{kkw} or \cite[Remark 3.13]{llr2}). We do not know either whether $\mathcal F(M,X)$ has the Daugavet property if both $\mathcal F(M)$ and $X$ have the Daugavet property. For general Banach spaces, it is an open problem whether $X\pten Y$ has the Daugavet property if $X$ and $Y$ have the Daugavet property (c.f. \cite{rtv,werner}).
\end{remark}

\section*{Acknowledgment}

The authors are deeply grateful to the anonymous referee for many helpful suggestions that improved significantly the readability of this paper. In particular, we appreciate the contribution of the referee by providing the shortened proof of Proposition~\ref{prop:octaLipc0}.

\end{document}